\newcommand\restr[2]{{
  \left.\kern-\nulldelimiterspace 
  #1 
  \vphantom{\big|} 
  \right|_{#2} 
  }}
\newcommand\srestr[2]{{
  \left.\kern-\nulldelimiterspace 
  #1 
  \right|_{#2} 
  }}
\newcommand{\calH}{\mathcal{H}}
\newcommand{\calE}{\mathcal{E}}
\newcommand{\RR}{\mathbb{R}}
\newcommand{\NN}{\mathbb{N}}
\newcommand{\Sph}{\mathbb{S}}
\newcommand*\intdiff{\mathop{}\!\mathrm{d}}
\newcommand{\abs}[1]{\lvert#1\rvert}
\newcommand{\norm}[1]{\lVert#1\rVert}
\newcommand{\dmu}{\intdiff \mu}
\newcommand{\dmui}{\intdiff \mu_i}
\newcommand{\nui}{\nu_i}
\newcommand{\ve}{V^\epsilon}
\newcommand{\vei}{V^i}
\newcommand{\vi}{V^i}
\newcommand{\veip}{V^{i'}}
\newcommand{\nv}{\norm{V}}
\newcommand{\nvi}{\norm{V^i}}
\newcommand{\nve}{\norm{V^\eps}}
\newcommand{\dimh}{\dim_\calH}
\newcommand{\dv}{\intdiff V}
\newcommand{\dve}{\intdiff \ve}
\newcommand{\dvei}{\intdiff \vei}
\newcommand{\dveip}{\intdiff \veip}
\newcommand{\dvi}{\intdiff \vi}
\newcommand{\dnve}{\intdiff \nve}
\newcommand{\dnvi}{\intdiff \nvi}
\newcommand{\dnv}{\intdiff \nv}
\newcommand{\Ae}{A^{\epsilon}}
\newcommand{\Aei}{A^{i}}
\newcommand{\Bei}{B^{\epsilon_i}}
\newcommand{\eps}{\epsilon}
\newcommand{\dH}{\intdiff \calH}
\newcommand{\pd}{\partial}
\renewcommand{\div}{\mathrm{div}}
\newcommand{\linspan}{\mathrm{span}}
\DeclareMathOperator\ind{ind}
\DeclareMathOperator\spt{spt}
\DeclareMathOperator\reg{reg}
\DeclareMathOperator\sing{sing}
\DeclareMathOperator\Ric{Ric}
\DeclareMathOperator{\dist}{dist}
\DeclareMathOperator{\inj}{inj}
\DeclareMathOperator{\grad}{grad}
\theoremstyle{plain}
\newtheorem{thm}{Theorem}[section]
\newtheorem*{thm*}{Theorem}
\newtheorem{lem}[thm]{Lemma}
\newtheorem{prop}[thm]{Proposition}
\newtheorem{cor}[thm]{Corollary}
\newtheorem*{cor*}{Corollary}
\theoremstyle{definition}
\newtheorem{defn}[thm]{Definition}
\newtheorem*{ackn}{Acknowledgments}
\newtheorem*{outline}{Outline of the paper}
\theoremstyle{remark}
\newtheorem{rem}[thm]{Remark}
\newtheorem*{rem*}{Remark}
\newtheorem{claim}{Claim}
\newtheorem*{claim*}{Claim}
\newtheorem*{notation*}{Notation}
\theoremstyle{plain}
\newenvironment{customthm}[1]
  {\innercustomthm}
  {\endinnercustomthm}
\newenvironment{customcor}[1]
  {\innercustomcor}
  {\endinnercustomcor} 
\numberwithin{equation}{section}
\begin{document}

\title[Spectrum and index of Allen--Cahn minimal hypersurfaces]
{Spectrum and index of two-sided\\ Allen--Cahn minimal hypersurfaces}
\author[Fritz Hiesmayr]{Fritz Hiesmayr}
\address{Centre for Mathematical Sciences
\\University of Cambridge
\\Cambridge CB3 0WB, United Kingdom}
\email{\url{f.l.hiesmayr@maths.cam.ac.uk}}
\date{\today}

\begin{abstract}
The combined work of Guaraco, Hutchinson, Tonegawa and Wickramasekera 
has recently  produced a new proof of the classical theorem 
that any closed Riemannian manifold of dimension $n + 1 \geq 3$ contains a minimal 
hypersurface with a singular set of Hausdorff dimension at most $n-7$.
This proof avoids the Almgren--Pitts geometric min-max procedure for the area
functional that was instrumental in the original proof, and is instead based on a 
considerably simpler PDE min-max construction of critical points
of the Allen--Cahn functional.
Here we prove a spectral lower bound the hypersurfaces that arise
from sequences of critical points with bounded indices.
%
%
In particular, the index of two-sided minimal hypersurfaces constructed
using multi-parameter Allen--Cahn min-max methods is bounded above
by the number of parameters used in the construction.
Finally, we point out by an elementary inductive argument how the
regularity of the hypersurface follows from the corresponding result
in the stable case.

\end{abstract}

\maketitle

\section{Introduction}

A classical theorem, due to the combined work of Almgren, Pitts and Schoen--Simon,
asserts that for $n \geq 2$, every $(n+1)$-dimensional closed Riemannian manifold $M$ contains a
minimal hypersurface smoothly embedded away from a closed singular set of Hausdorff
dimension at most $n-7$. The original proof of this theorem is based on a highly
non-trivial geometric min-max construction due to Pitts~\cite{Pitts81},
which extended earlier work of Almgren~\cite{Almgren65}.
This construction is carried out directly for
the area functional on the space of hypersurfaces equipped with an appropriate weak
topology, and it yields in the first instance a critical point of area satisfying
a certain almost-minimizing property.
This property is central to the rest of the argument, and allows
to deduce regularity of the min-max hypersurface from compactness of
the space of uniformly area-bounded stable minimal hypersurfaces
with singular sets of dimension at most $n-7$,
a result proved for $2 \leq n \leq 5$ by Schoen--Simon--Yau~\cite{SchoenSimonYau75} and
extended to arbitrary $n \geq 2$ by Schoen--Simon~\cite{SchoenSimon81}.
(The Almgren--Pitts min-max construction has recently been streamlined by
De Lellis and Tasnady~\cite{DeLellisTasnady13} giving a shorter proof.
However, their argument still follows Pitts' closely and is in
particular based on carrying out the min-max procedure directly
for the area functional on hypersurfaces.)

In recent years an alternative approach to this theorem has been developed, whose philosophy is to push the regularity theory to its limit in order to gain substantial simplicity on the existence part.
Specifically, this approach differs from the original one in two key aspects: 
first, it is based on a strictly PDE-theoretic min-max construction that replaces the Almgren--Pitts geometric construction;
second, for the regularity conclusions, it relies on a sharpening of
the Schoen--Simon compactness theory for stable minimal hypersurfaces.
The idea in this approach is to construct a minimal hypersurface as the
limit-interface associated with a sequence of solutions $u = u_{i}$ to the Allen--Cahn equation
\begin{equation}
\label{eq:allen_cahn_equation}
\Delta u -\epsilon_{i}^{-2} W'(u) = 0
\end{equation}
on the ambient space $M$, where $W \! : {\mathbb R} \to {\mathbb R}$
is a fixed double-well potential with precisely two minima at $\pm 1$ with $W(\pm1) = 0$.
Roughly speaking, if the $u_{i}$ solve \eqref{eq:allen_cahn_equation} and satisfy appropriate bounds, then the level sets of $u_i$
converge as $\epsilon_{i} \to 0^{+}$ to a stationary codimension
$1$ integral varifold $V$.
This fact was rigorously established by
Hutchinson--Tonegawa~\cite{HutchinsonTonegawa00}, using in part methods inspired
by the earlier work of Ilmanen in the parabolic setting Ilmanen~\cite{Ilmanen93}.
Note that $u_{i}$ solves \eqref{eq:allen_cahn_equation} if and only if
it is a critical point of the Allen--Cahn functional
\begin{equation}
E_{\epsilon_{i}} (u)
= \int_U \epsilon_{i} \frac{\abs{\nabla u}^{2}}{2} + \frac{W(u_i)}{\eps_i}.
\end{equation}
If the solutions $u_{i}$ are additionally assumed stable with respect to
$E_{\epsilon_{i}}$, then Tonegawa and Wickramasekera \cite{TonegawaWickramasekera10}
proved that the resulting
varifold $V$ is supported on a hypersurface smoothly embedded away from a closed
singular set of Hausdorff dimension at most $n-7$, using an earlier result of
Tonegawa \cite{Tonegawa05} which established the stability of the regular part
$\reg V$ with respect to the area functional.
Their proof of this regularity result uses the regularity and compactness theory
for stable codimension 1 integral varifolds developed by
Wickramasekera~\cite{Wickramasekera14} sharpening the Schoen--Simon theory.

Stability of $u_i$ means that the second variation of the Allen--Cahn functional
$E_{\eps_i}$ with respect to $H^1(M)$ is a non-negative quadratic form.
More generally the index $\ind u_i$ denotes the number of strictly negative
eigenvalues of the elliptic operator $L_i = \Delta - \eps_i^{-2}W''(u_i)$,
so that $u_i$ is stable if and only if $\ind u_i = 0$.
Using min-max methods for semi-linear equations,
Guaraco~\cite{Guaraco15} recently gave a simple and elegant construction
of a solution $u_{i}$ to \eqref{eq:allen_cahn_equation}
with $\ind  u_{i} \leq 1$ and $\norm{u_i}_{L^\infty} \leq~1$,
and such that as $\epsilon_{i} \to 0$, the energies
$E_{\epsilon_{i}}(u_{i})$ are bounded above and below away from 0.
The lower energy bound
guarantees that the resulting limit varifold $V$ is non-trivial.
Since $\ind u_i \leq 1$, $u_i$ must be stable in at least one of
every pair of disjoint open subsets of $M$; similarly if
$\ind u_i \leq k$ then $u_i$ must be stable in at least one of every
$(k+1)$-tuple of disjoint open sets. This elementary observation,
originally due to Pitts in the context of minimal surfaces, together
with a tangent cone analysis in low dimensions, allowed Guaraco
to deduce the regularity of $V$ from the results of \cite{TonegawaWickramasekera10}.
More recently still, Gaspar and Guaraco~\cite{GasparGuaraco2016}
have used $k$-parameter min-max methods to produce sequences of
critical points with Morse index at most $k$, for all positive
integers~$k$.
Our results show that this index bound is inherited by
the minimal surface arising as $\eps_i \to 0$, provided
it has a trivial normal bundle.
We also point out that the regularity follows
in all dimensions from the corresponding result in the stable case via
an inductive argument that avoids the tangent cone analysis used in~\cite{Guaraco15}.

\begin{cor*}
Let $M$ be a closed Riemannian manifold of dimension $n+1 \geq 3$.
Let $V$ be the integral varifold arising as the limit-interface of
the sequence $(u_i)$ of solutions to \eqref{eq:allen_cahn_equation}
constructed in~\textup{\cite{Guaraco15}}
(respectively in~\textup{\cite{GasparGuaraco2016}} using $k$-parameter min-max	methods).
Then $\dimh \sing V \leq n-7$. If $\reg V$ is two-sided,
then its Morse index with respect to the
area functional satisfies $\ind_{\calH^n} \reg V \leq 1$
(respectively $\ind_{\calH^n} \reg V \leq k$).
\end{cor*}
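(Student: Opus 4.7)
The plan is to deduce both conclusions of the corollary from the main theorems of this paper applied to the sequence $(u_i)$ of solutions to~\eqref{eq:allen_cahn_equation} produced by the construction of Guaraco~\cite{Guaraco15} or Gaspar-Guaraco~\cite{GasparGuaraco2016}. The only inputs beyond those theorems are that this sequence has uniformly bounded energies, bounded below away from zero so that the limit varifold $V$ is non-trivial, and a uniform index bound $\ind u_i \leq k$, where $k=1$ in the single-parameter case and equals the number of parameters in the multi-parameter case. Both properties are built into the constructions by design, so the work is essentially verification and assembly.

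For the regularity statement $\dimh \sing V \leq n-7$, I would argue by induction on the index bound $k$. The base case $k=0$ is the stable regularity theorem of~\cite{TonegawaWickramasekera10}, which applies directly when every $u_i$ is stable on $M$. For the inductive step I use the observation, going back to Pitts in the minimal surface setting, that if $\ind u_i \leq k$ then in any family of $k+1$ pairwise disjoint open sets $u_i$ must be stable on at least one. A diagonal argument applied to an exhausting countable basis of $M$ by small balls then yields, after passing to a subsequence, at most $k$ limit points where instability of $u_i$ concentrates, with $u_i$ uniformly stable on compact subsets of the complement. Away from those points the stable theorem gives the desired dimension bound; around each concentration point a localised application of the inductive hypothesis in a small punctured ball, exploiting the fact that the remaining index in that region is strictly smaller, completes the induction.

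For the index statement I would invoke the spectral lower bound of this paper. The two-sidedness hypothesis on $\reg V$ is essential here: it ensures that the Jacobi operator on normal sections of $\reg V$ reduces to a scalar elliptic operator, whose negative eigenvalues can be compared with those of the Allen-Cahn linearisation $L_i = \Delta - \eps_i^{-2}W''(u_i)$. The main theorem then yields the inequality $\ind_{\calH^n} \reg V \leq \liminf_i \ind u_i \leq k$, giving the stated bound in each of the two cases.

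The main technical work lies in the spectral comparison itself, which is the central theorem of the paper rather than of this corollary; it is the point where eigenfunctions of $L_i$ must be converted into admissible test sections for the area second variation on $\reg V$, using the Hutchinson-Tonegawa convergence of diffuse energy measures to the mass measure of $V$. At the level of the corollary, the remaining subtlety is the localisation step in the inductive regularity argument, particularly in the low dimensions $n < 7$ where the stable case already requires an empty singular set and no concentration points can be tolerated without further reduction.
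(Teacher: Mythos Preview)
Your proposal is correct and matches the paper's approach: the corollary is derived directly from Theorem~A and Corollary~B applied to the Guaraco and Gaspar--Guaraco sequences, which satisfy Hypotheses~\ref{hypA}--\ref{hyp:C_index_bound} by construction, and your sketch of the inductive regularity argument is essentially that of Section~\ref{subsection:proof_regularity_of_v}. One small slip in your parenthetical description of the spectral comparison: the direction is reversed---to prove $\lambda_p(W) \geq \limsup_i \lambda_p^i(W)$ one extends test functions on $\reg V$ to competitors for $L_i$ on $M$ (Lemmas~\ref{lem:extension_scalar_functions}--\ref{lem:theorem_holds_if_second_ff_bounded}), not eigenfunctions of $L_i$ to sections on $\reg V$.
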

In min-max theory, one generally expects that the Morse index of the constructed critical point is no greater than the number of parameters
used in the construction.
The above corollary gives this result for the constructions of Guaraco and
Gaspar--Guaraco, provided the arising hypersurface is two-sided.
This was recently shown by Chodosh and Mantoulidis~\cite{ChodoshMantoulidis2018}
to hold automatically when the ambient manifold $M$
has dimension $3$ and is equipped either with a bumpy metric or has positive
Ricci curvature. Building on work of Wang and Wei~\cite{WangWei2017}, Chodosh--Mantoulidis
prove curvature and strong sheet separation estimates, and use these
to deduce that in this three-dimensional setting the convergence of the level
sets occurs with multiplicity $1$.
They moreover show that in all dimensions, if the limiting surface has
multiplicity $1$, then its index is bounded \emph{below} by the index
of the $u_\eps$.

This complements our upper bound for the index, which is a direct consequence
of a lower bound for $(\lambda_p)$, the spectrum of the elliptic
operator $L_V = \Delta_V + \abs{A}^2 + \Ric_M(\nu,\nu)$---the \emph{scalar
Jacobi operator}---in terms of $(\lambda_p^i)$, the spectra of the operators $(L_i)$.
Establishing this spectral lower bound is our main result.
\begin{thm*}
Let $M$ be a closed Riemannian manifold of dimension $n+1 \geq 3$.
Let $V$ be the integral
varifold arising from a sequence $(u_i)$ of solutions to
\eqref{eq:allen_cahn_equation} with $\ind u_i \leq k$ for some $k \in \NN$.
Then $\dimh \sing V \leq n-7$ and
\begin{enumerate}[font = \upshape, label = (\roman*)]
	\item $\lambda_p(W) \geq \limsup_{i \to \infty} \lambda_p^i(W)$ for all
	$W \subset \subset M \setminus \sing V$ and $p \in \NN$, \label{item:intro_thm_spec_lower_bd}
	\item $\ind_{\calH^n} C \leq k$ for every two-sided connected component
	$C \subset \reg V$.
\end{enumerate}
\end{thm*}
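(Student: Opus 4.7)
The plan is to treat the three conclusions in the order (i), (ii), (iii): regularity is needed to interpret the spectrum of the Jacobi operator $L_V$ in (ii), which is the main technical content, and (iii) follows from (ii) by a capacity argument. For (i), I induct on $k$. The base case $k = 0$ is exactly the stable regularity theorem of Tonegawa--Wickramasekera \cite{TonegawaWickramasekera10}, applied locally. For the inductive step, introduce the set of bad points
$$\Sigma_{\mathrm{bad}} = \bigl\{ x \in M : \text{every neighbourhood } U \ni x \text{ satisfies } \ind(u_i|_U) = k \text{ for all large } i \bigr\}$$
(after passing to a subsequence). If $\Sigma_{\mathrm{bad}}$ contained $k+1$ distinct points, pairwise disjoint neighbourhoods would give $\ind u_i \geq (k+1)k > k$ along a common subsequence, contradicting the hypothesis, so $|\Sigma_{\mathrm{bad}}| \leq k$. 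Any $x \notin \Sigma_{\mathrm{bad}}$ admits a neighbourhood $U$ on which $\ind(u_i|_U) \leq k-1$ along a subsequence; the inductive hypothesis (applied locally on $U$) gives $\dimh(\sing V \cap U) \leq n-7$, and a covering argument yields $\dimh(\sing V \setminus \Sigma_{\mathrm{bad}}) \leq n-7$. The finite set $\Sigma_{\mathrm{bad}}$ is absorbed into the bound when $n \geq 7$; in lower dimensions the stable case forces $\sing V$ to be empty off $\Sigma_{\mathrm{bad}}$, and the remaining isolated singularities are ruled out by a removable-singularity argument for stationary integral varifolds whose regular part is stable in a punctured neighbourhood.

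For (ii), the core idea is to produce $p$ linearly independent test functions for the Allen--Cahn quadratic form $\delta^2 E_{\eps_i}(u_i)$ from the first $p$ Jacobi eigenfunctions of $L_V$ on $\reg V \cap W$. Let $\phi_1, \ldots, \phi_p$ be an $L^2$-orthonormal collection of such eigenfunctions with eigenvalues $\mu_1 \leq \ldots \leq \mu_p = \lambda_p(W)$. Since $W \subset\subset M \setminus \sing V$, part (i) ensures $\reg V \cap W$ is a smooth hypersurface on which the signed distance $d(x)$ to $V$ and the nearest-point projection $\pi(x)$ are defined in a tubular neighbourhood. By the Hutchinson--Tonegawa theory \cite{HutchinsonTonegawa00} and elliptic regularity away from $\sing V$, the $u_i$ converge in $C^2_{\mathrm{loc}}$ to the one-dimensional heteroclinic profile $\bar u(d/\eps_i)$ on each sheet of $\reg V \cap W$. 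Setting
$$\psi_j^i(x) = \phi_j(\pi(x)) \cdot \bar u'\bigl(d(x)/\eps_i\bigr)$$
in the tubular neighbourhood (extended by zero outside, and summed over sheets when $V$ has multiplicity $> 1$), a direct computation using $\bar u'' = W'(\bar u)$ and the second fundamental form of $\reg V$ yields
$$\frac{\delta^2 E_{\eps_i}(u_i)[\psi_j^i,\psi_j^i]}{\|\psi_j^i\|_{L^2}^2} \longrightarrow \mu_j \quad \text{as } i \to \infty,$$
and the analogous convergence of the full bilinear form on $\mathrm{span}(\psi_1^i, \ldots, \psi_p^i)$. The min-max characterisation of eigenvalues then gives $\lambda_p^i(W) \leq \max_j \mu_j + o(1) = \lambda_p(W) + o(1)$; taking $\limsup$ proves (ii).

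For (iii), suppose a two-sided connected component $C \subset \reg V$ had $\ind_{\calH^n} C \geq k+1$. Then there is a $(k+1)$-dimensional subspace $\mathcal{P} \subset C_c^\infty(C)$ on which the Jacobi quadratic form is strictly negative. Because $\sing V \cap C$ has Hausdorff dimension at most $n-7 \leq n-2$ by (i), it has vanishing $H^1$-capacity in $C$, so $\mathcal{P}$ can be approximated after cutting off near $\sing V$ by a $(k+1)$-dimensional subspace supported in a relatively compact $W \subset\subset C \setminus \sing V$, preserving strict negativity of the Jacobi form. Hence $\lambda_{k+1}(W) < 0$; by (ii), $\lambda_{k+1}^i(W) < 0$ for all large $i$, yielding $\ind u_i \geq k+1$ and contradicting the hypothesis.

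The main obstacle is the asymptotic analysis in (ii): the computation of $\delta^2 E_{\eps_i}(u_i)[\psi_j^i, \psi_j^i]$ requires a careful expansion of both the tangential and normal contributions to $|\nabla \psi_j^i|^2$ and of the potential term $W''(u_i)(\psi_j^i)^2$ against the ansatz, exploiting the cancellation that makes $\bar u'$ the linearised heteroclinic mode. One must also reconcile the energy-constant normalisation $\sigma = \int_\RR (\bar u')^2$ arising from the Hutchinson--Tonegawa equipartition of energy with the $L^2$ normalisation defining the spectrum, and handle the multiplicity consistently across sheets of $V$. The off-diagonal entries of the bilinear form on $\mathrm{span}(\psi_1^i, \ldots, \psi_p^i)$, which must match those of the Jacobi form in the limit, are the most sensitive part of the analysis.
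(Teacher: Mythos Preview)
Your regularity argument (i) and your deduction of the index bound (iii) from the spectral bound are essentially sound and close in spirit to the paper's, though organised differently: you isolate a finite ``bad set'' $\Sigma_{\mathrm{bad}}$ up front, while the paper runs the same dichotomy through a sequence of shrinking ball-covers. (Your arithmetic is off --- two bad points already give total index $\geq 2k > k$ --- but the conclusion $|\Sigma_{\mathrm{bad}}| < \infty$ is what you need.) The removable-singularity step in low dimensions is exactly the paper's Claim~4.3, invoking Wickramasekera's regularity for stable varifolds.

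The genuine gap is in (ii). Your ansatz $\psi_j^i(x) = \phi_j(\pi(x))\,\bar u'(d(x)/\eps_i)$ presupposes that, on a tubular neighbourhood of $\reg V \cap W$, the solutions $u_i$ are $C^2_{\mathrm{loc}}$-close to a superposition of one-dimensional heteroclinics stacked on well-separated sheets. This is \emph{not} a consequence of Hutchinson--Tonegawa plus elliptic regularity: their theory gives only varifold convergence and equipartition of energy, not a sheet decomposition or $C^2$-closeness to the model profile. Such fine structure is known only under restrictive extra hypotheses (multiplicity one, or the Wang--Wei/Chodosh--Mantoulidis estimates in dimension three), and in general the transition layers can interact in ways that invalidate the direct computation you propose. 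Indeed, the paper remarks that Le's earlier argument along these lines required precisely a multiplicity-one assumption.

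The paper sidesteps this entirely. Instead of the heteroclinic ansatz it uses test functions of the form $|\nabla u_i|\,\phi$, where $\phi \in C_c^1(W)$ is an extension of a function on $\reg V$ that is constant along normal geodesics. The key identity (Lemma~3.2, due to Farina/Tonegawa)
\[
\delta^2\mathcal E_{\eps_i}(u_i)\bigl(|\nabla u_i|\phi,|\nabla u_i|\phi\bigr)
= \int |\nabla u_i|^2|\nabla\phi|^2 - \bigl(|\nabla^2 u_i|^2 - |\nabla|\nabla u_i||^2 + \Ric(\nabla u_i,\nabla u_i)\bigr)\phi^2
\]
bounds the Allen--Cahn Rayleigh quotient from above by a quotient involving the level-set second fundamental forms $A^i$, integrated against $\|V^i\|$. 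No profile asymptotics are needed: one only requires uniform $L^2(\|V^i\|)$-bounds on $|A^i|$ over $W$, and these come from the same inductive stability-in-balls dichotomy you used for regularity (stable balls give the curvature bound directly; an unstable ball drops the index by one in its complement). Weak convergence $A^i\,dV^i \rightharpoonup A\,dV$ and the weighted min-max characterisation of $\lambda_p(W)$ then finish the comparison. This is what makes the argument work at arbitrary multiplicity.
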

\begin{rem*}
The spectral lower bound of~\ref{item:intro_thm_spec_lower_bd} also holds
if the assumptions on the $u_i$ are weakened in a spirit similar to the	work of
Ambrozio, Carlotto and Sharp~\cite{AmbrozioCarlottoSharp2015},
that is if instead of an index upper bound one assumes that for some
$k \in \NN$ there is $\mu \in \RR$ such that $\lambda_k^i \geq \mu$ for all $i$.
(Note that the index bound $\ind u_i \leq k$ is equivalent to
$\lambda_{k+1}^i \geq 0$.)
\end{rem*}
\begin{rem*}
It was recently brought to our attention that a similar result
had previously been proved by Le~\cite{Le2011} in ambient Euclidean space,
under the additional assumption that the convergence to the
limit surface occurs with multiplicity $1$.
Adapting the methods developed in~\cite{Le2011,Le2015}
to ambient Riemannian manifolds, Gaspar generalised our
results to the case where the limit varifold is one-sided,
without any assumption on multiplicity~\cite{Gaspar2017}.
Their general approach is similar to ours but subtly different,
in that they instead consider the second \emph{inner}
variation of the Allen--Cahn functional; see also the recent
work of Le and Sternberg~\cite{LeSternberg2018}, where similar bounds are established
for other examples of eigenvalue problems.
\end{rem*}

For the minimal hypersurfaces obtained by a direct min-max
procedure for the area functional on the space of hypersurfaces (as in the Almgren--Pitts construction), index bounds have recently been established
by Marques and Neves~\cite{MarquesNeves15}.
Both the Almgren--Pitts existence proof and the Marques--Neves proof of
the index bounds are rather technically involved; in particular, the
min-max construction in this setting has to be carried out in a bare-handed
fashion in the absence of anything like a Hilbert space structure.
In contrast, in the approach via the Allen--Cahn functional, Guaraco's
existence proof is strikingly simple, and our proofs for the spectral
bound and the regularity of $V$ are elementary bar the fact that
they rely on the highly non-trivial sharpening of the
Schoen--Simon regularity theory for stable hypersurfaces as
in~\cite{Wickramasekera14}.

\begin{outline}
In Section~\ref{sec:main_statement} we briefly expose notions from the theory of
varifolds, set the context for the rest of the paper and give the statements
of the main result and its corollaries.
Their proof requires a number of preliminary results, which are contained in
Section~\ref{section:preliminary_results}.
The proof of the main result (Theorem~\ref{thm:main_thm_propagation_of_index_bounds})
is in Section~\ref{sec:proof_main_thm}, and is split into two parts:
in the first part we prove the spectral lower bound by an inductive
argument on $\ind u_i$; this immediately implies the
index upper bound. The proof of $\dimh \sing V \leq n-7$ is given in the second
part, and uses a similar inductive argument.
There are two appendices: Appendix~\ref{app:measure_function_convergence} contains
two elementary lemmas from measure theory that are used repeatedly in
Section~\ref{section:preliminary_results}.
Appendix~\ref{app:second_fundamental_form_coordinate_expression}
gives a proof of Proposition~\ref{prop:weak_convergence_second_ff},
which is a straight-forward adaptation of an argument used by
Tonegawa for the stable case.
\end{outline}

\begin{ackn}
I would like to thank my PhD supervisor Neshan Wickramasekera for his
encouragement and support, and Otis Chodosh for helpful conversations.
This work was supported by the UK Engineering and Physical Sciences Research Council
(EPSRC) grant EP/L016516/1 for the University of Cambridge Centre for Doctoral 
Training, the Cambridge Centre for Analysis.
\end{ackn}

\section{Varifolds, stability and statement of main theorem}
\label{sec:main_statement}
The setting is as follows: 
$(M^{n+1},g)$ is a closed (that is, compact without boundary) Riemannian manifold
of dimension $n+1 \geq 3$, and $U \subset M$ is an arbitrary
open subset, possibly equal to $M$ itself.

\subsection{Varifolds: basic definitions}
An $n$-dimensional \emph{varifold} in $U$ is a Radon measure
on the Grassmannian $G_n(U) = \{(p,E) \mid p \in U, E \subset T_pM,
\dim E = n\}$---the space of $n$-dimensional planes over points in $U$.

An important subclass are the \emph{integral varifolds},
which correspond to a pair $(\Sigma,\theta)$ of a countably $n$-rectifiable set
$\Sigma \subset U$ and a Borel-measurable function $\theta \in L_{\mathrm{loc}}^1(\Sigma,\NN)$ via
\begin{equation}
V_{\Sigma,\theta}(\phi) = \int_U \phi(x,T_x\Sigma) \theta(x) \dH^n(x)
\quad \text{for all $\phi \in C_c(G_n(U))$},
\end{equation}
where $T_x\Sigma$ is the $\calH^n$-a.e. defined tangent space to $\Sigma$.
The function $\theta$ is called the \emph{multiplicity function}.

A sequence $(\vi)$ \emph{converges as varifolds} to $V$
if they converge weakly as Radon measures on $G_n(U)$, i.e.\ if
\begin{equation}
\int_{G_n(U)} \phi \dvi \to \int_{G_n(U)} \phi \dv
\quad
\text{for all $\phi \in C_c(G_n(U))$.}
\end{equation}

The \emph{weight measure} $\norm{V}$ of a varifold $V$ is defined by
\begin{equation}
	\norm{V}(\phi) = \int_{G_n(U)} \phi(x) \dv(x,S)
\quad
\text{for all $\phi \in C_c(U)$}.
\end{equation}

Consider an arbitrary vector field $X \in C_c^1(U,TM)$ with
flow $(\Phi_t)$. We deform $V$ in the direction
of $X$ by pushing it forward via its flow, that is
\begin{equation}
(\Phi_t)_*V(\phi) = \int_{G_n(U)} \phi(\Phi_t(x),\intdiff \Phi_t(x) \cdot S) J \Phi_t(x) \dv(x,S)
\end{equation}
for all $\phi \in C_c(G_n(U))$, 
where $J\Phi_t(x) =
\det (\mathrm{d} \Phi_t(x)^* \circ \mathrm{d} \Phi_t(x)) ^{\frac{1}{2}}$
is the Jacobian of $\Phi_t(x)$.

Differentiating the corresponding weight measures
$\lVert (\Phi_t)_* V \rVert$ yields the \emph{first variation} of $V$:
\begin{equation}
\delta V(X) = \srestr{\frac{\mathrm{d}}{\mathrm{d}t}}{t = 0} \norm{(\Phi_t)_*V}(U).
\end{equation}
When $\delta V(X) = 0$ for all vector fields $X \in C_c^1(U,TM)$, we say
that $V$ is \emph{stationary} in $U$.

By definition, the \emph{regular part} of $V$ is the set of points 
$x \in U \cap \spt \lVert V \rVert$ such that in a neighbourhood of $x$,
$\spt \lVert V \rVert$ is smoothly embedded in $M$.
Its complement is the \emph{singular part} of $V$,
denoted $\sing V := U \cap \spt \nv \setminus \reg V$. 
For a stationary integral varifold $V$, the Allard regularity theorem
implies that $\reg V$ is a dense subset of $U \cap \spt \lVert V \rVert$
\cite[Ch.~5]{Simon84}.
\subsection{Stability and the scalar Jacobi operator}
Throughout this section $V$ will be a stationary integral $n$-varifold in $U \subset M$.
We call $V$ \emph{two-sided} if its regular part $\reg V$
is two-sided, that is if the normal bundle $NV := N(\reg V)$
admits a continuous non-vanishing section.
When this fails, $V$ is called \emph{one-sided}.
(Recall that when the ambient manifold $M$ is orientable,
then $\reg V$ is two-sided if and only if it is orientable.)

Suppose that $V$ is two-sided, and fix a unit normal vector field
$N \in C^1(NV)$, so that every function $\phi \in C_c^1(\reg V)$
corresponds to a section $\phi N \in C_c^1(NV)$ and vice-versa.
After extending the vector field $\phi N$ to $C_c^1(U,TM)$---the chosen
extension will not matter for our purposes---we can deform $\reg V$
with respect to its flow $(\Phi_t)$.
As $V$ is stationary, the first variation vanishes: $\delta V(\phi N) = 0$.
A routine calculation, the details of which can be found for instance
in~\cite[Ch.~2]{Simon84} shows that the second variation satisfies
\begin{multline}
\label{eq:second_variation_smooth}
\delta^2 V(\phi N) =
\srestr{\frac{\intdiff^2}{\intdiff t^2}}{t = 0} \norm{(\Phi_t)_* V}(U) 
= \\ \int_U \abs{\nabla_V \phi}^2 -
( \abs{A}^2 + \Ric_M(N,N) )\phi^2 \dnv,
\end{multline}
where $\nabla_V$ is the Levi-Civita connection on $\reg V$,
$A$ is the second fundamental form of $\reg V \subset M$, and 
$\Ric_M$ is the Ricci curvature tensor on $M$.

The expression on the right-hand side can be defined for one-sided
$V$ by replacing $N$ by an arbitrary measurable unit section
$\nu: \reg V \to NV$, but it loses its interpretation in terms
of the second variation of the area.

\begin{defn}[Scalar second variation]
The \emph{scalar second variation} of a stationary integral varifold $V$
is the quadratic form $B_V$ defined for $\phi \in C_c^2(\reg V)$ by
\begin{equation}
\label{defn:scalar_second_variation}
B_V(\phi,\phi) 
= \int_{\reg V} \abs{\nabla_V \phi}^2
- (\abs{A}^2 + \Ric_M(\nu,\nu)) \phi^2 \dnv.
\end{equation}

\end{defn} 
\begin{rem}
When $V$ is one-sided, the second variation of its area has to be measured
with respect to variations in $C_c^1(NV)$---we refer
to \cite[Ch.~2]{Simon84} or~\cite[Sec.~1.8]{ColdingMinicozzi11}
for further information on this.
We called $B_V$ `scalar' in order to highlight its difference with
the second variation of area in this case,
but emphasise that for the remainder `second variation' refers exclusively to the
quadratic form $B_V$ from Definition~\ref{defn:scalar_second_variation}. 
(For the same reasons we also call the Jacobi operator $L_V$
`scalar' in Definition~\ref{defn:scalar_jac} below,
but omit this adjective in the remainder of the text.)
\end{rem}
One can consider $\reg V$ as a stationary integral varifold
in its own right by identifying it with the corresponding varifold
with constant multiplicity $1$. Its scalar second variation
\begin{equation}
B_{\reg V}(\phi,\phi) = \int_{\reg V} \abs{\nabla_V \phi}^2
- (\abs{A}^2 + \Ric_M(\nu,\nu)) \phi^2 \dH^n
\end{equation}
differs from $B_V$ only in that the integral is with respect
to the $n$-dimensional Hausdorff measure instead of $\nv$. This means exactly that 
while $B_V$ is `weighted' by the multiplicity of $V$, the quadratic
form $B_{\reg V}$ measures the variation of `unweighted' area;
we will briefly use this in Section~\ref{sec:prelim_spectrum_sec_var}.

After integrating by parts on $\reg V$, the form $B_V$
corresponds to the second-order elliptic operator
$L_V = \Delta_V + \abs{A}^2 + \Ric_M(\nu,\nu),$
where $\Delta_V$ is the Laplacian on $\reg V$.
\begin{defn}[Scalar Jacobi operator]
	\label{defn:scalar_jac}
	The \emph{scalar Jacobi operator} of $V$, denoted $L_V$, is the second-order
	elliptic operator
	\begin{equation}
	L_V \phi = \Delta_V \phi + 
	(\abs{A}^2 + \Ric_M(\nu,\nu)) \phi
	\quad \text{for all $\phi \in C^2(\reg V)$},
	\end{equation}
	where $\nu: \reg V \to NV$ is an arbitrary measurable unit normal
	vector field.
\end{defn}
The curvature of $\reg V$ can blow up as one approaches $\sing V$,
in which case the coefficients of the operator $L_V$ would
not be bounded. To avoid this, we restrict ourselves to a compactly
contained open subset $W \subset \subset U \setminus \sing V$;
moreover we require
$W \cap \reg V \neq \emptyset$ to avoid vacuous statements.

We use the sign convention for the spectrum defined in
\cite[Ch.~8]{GilbargTrudinger98}, where $\lambda \in \RR$
is an eigenvalue of $L_V$ in $W$ if there is $\varphi
\in H_0^1(W \cap \reg V)$ such that $L_V \varphi + \lambda \varphi = 0$.
By standard elliptic PDE theory the spectrum
\begin{equation}
\lambda_1 \leq \lambda_2 \leq \cdots \to +\infty
\end{equation}
of $L_V$ in $W$ is discrete and bounded below.
We will sometimes also write $\lambda_p(W)$ instead of $\lambda_p$
in order to highlight the dependence of the spectrum on the
subset $W$. 
The eigenvectors of $L_V$
span the space $H_0^1(W \cap \reg V) = W_0^{1,2}(W \cap \reg V)$,
which we abbreviate throughout by $H_0^1$. 

The \emph{index of $B_V$} in $W$ is the
maximal dimension of a subspace of $H_0^1$ on which
$B_V$ is negative definite; equivalently
\begin{equation}
\ind_W B_V = \mathrm{card} \{ p \in \NN \mid \lambda_p(W) < 0 \}.
\end{equation}
Moreover $\ind B_V := \sup_W (\ind_W B_V)$, where
the supremum is taken over all $W \subset \subset U \setminus \sing V$ with
$W \cap \reg V \neq \emptyset$.
\begin{rem}
	We will see in Section~\ref{sec:prelim_spectrum_sec_var}
	that the index of $B_V$ coincides with the Morse index
	of $\reg V$ with respect to the area functional,
	at least when $\reg V$ is two-sided.
\end{rem}

\subsection{Statement of main theorem}

Let $(\eps_i)$ be a sequence of positive parameters with $\eps_i \to 0$
and consider an associated sequence of functions $(u_i)$ in $C^3(U)$ satisfying the
following hypotheses:

\begin{enumerate}[label=(\Alph*)]
\item Every $u_i \in C^3(U)$ is a critical point of the Allen--Cahn functional
\begin{equation}
\label{eq:allen_cahn_functional}
E_{\epsilon_i}(u)
= \int_U
\epsilon_i \frac{\lvert \nabla u \rvert^2}{2} + \frac{W(u)}{\epsilon_i}
\dH^{n+1},
\end{equation}
i.e. $u_i$ satisfies the equation
\begin{equation}
\label{eq:epsilon_allen_cahn_equation} 
- \epsilon_i^2 \Delta u_i + W'(u_i) = 0 \quad \text{in } U.
\end{equation} \label{hypA}

\item There exist constants $C, E_0 < \infty$ such that
\begin{equation}
\sup_i \lVert u_i \rVert_{L^\infty(U)} \leq C\; \text{and} \; \sup_i E_{\epsilon_i}(u_i) \leq E_0.
\end{equation} \label{hypB}

\item There exists an integer $k \geq 0$ such that the Morse index of each $u_i$
is at most $k$, i.e. any subspace of $C_c^1(U)$ on which the second variation
\begin{equation}
\delta^2 E_{\epsilon_i}(u_i)(\phi,\phi)
= \int_U \epsilon_i \lvert \nabla \phi \rvert^2
+ \frac{W''(u_i)}{\epsilon_i} \phi^2
\dH^{n+1}
\end{equation} 
is negative definite has dimension at most $k$. We write this $\ind u_i \leq k$,
and if $k = 0$, say that $u_i$ is \emph{stable in $U$}.
\label{hyp:C_index_bound}
\end{enumerate}
\begin{rem}
\label{rem:defn_stability}
More generally  $\ind_{U'} u_i$ denotes
the index of $\delta^2 E_{\eps_i}(u_i)$ with respect to variations in
$C_c^1(U')$ (or equivalently in $H_0^1(U')$) for all open
subsets $U' \subset U$.
When $\ind_{U'} u_i = 0$, we say that $u_i$ is \emph{stable in U'}.
\end{rem}

We follow Tonegawa~\cite{Tonegawa05}, using an idea originally
developed by Ilmanen~\cite{Ilmanen93} in a parabolic setting
to `average the level sets' of $u_i \in C^3(U)$
and define a varifold $\vei$ by
\begin{equation}
\label{eq:definition_varifolds}
\vei(\phi)
= \frac{1}{\sigma}
\int_{U \cap \{\nabla u_i \neq 0\}}
\epsilon_i \frac{\lvert \nabla u_i(x) \rvert^2}{2}
\phi(x,T_x \{u_i = u_i(x)\})
\dH^{n+1}(x)
\end{equation}
for all $\phi \in C_c(G_n(U))$.
Here $T_x \{u_i = u_i(x)\}$ is the tangent space to the level set
$\{u_i = u_i(x)\}$ at $x \in U$,
and $\sigma = \int_{-1}^1 \sqrt{W(s)/2} \intdiff s$ is a constant.
\begin{rem}
In~\cite{HutchinsonTonegawa00,Guaraco15} the varifold $\vei$ is defined by
the slightly different expression
$	\vei(\phi)
= \frac{1}{\sigma}
\int_{U \cap \{\nabla u_i \neq 0\}}
\abs{\nabla w_i(x)}  \phi(x,T_x \{u_i = u_i(x)\}) \dH^{n+1}(x)$, with
$w_i$ as in Theorem~\ref{lem:properties_V}.
The `equipartition of energy' \eqref{eq:thm_properties_V_equipartition_energy}
from Theorem~\ref{lem:properties_V}
shows that the two definitions give rise to the same limit varifold $V$
as $i \to \infty$.
\end{rem}
The weight measures $\norm{\vei}$ of these varifolds satisfy
\begin{equation}
\label{eq:density_weight_measure}
\lVert \vei \rVert(A)
= \frac{1}{\sigma} \int_{A \cap \{\nabla u_i \neq 0\}}
\epsilon_i \frac{\lvert \nabla u_i \rvert^2}{2} \dH^{n+1}
\leq \frac{E_0}{2\sigma}
\end{equation}
for all Borel subsets $A \subset U$, where the inequality follows
from the energy bound in Hypothesis~\ref{hypB}.
The resulting bound $\vei(G_n(U)) \leq \frac{E_0}{2\sigma}$ allows us to extract a
subsequence that converges to a varifold $V$, with properties laid out in the following theorem by Hutchinson--Tonegawa~\cite{HutchinsonTonegawa00}.
\begin{thm}[\cite{HutchinsonTonegawa00}]
	\label{lem:properties_V}
	Let $(u_i)$ be a sequence in $C^3(U)$ satisfying Hypotheses (A) and (B).
	Passing to a subsequence $\vei \rightharpoonup V$ as varifolds, and
	\begin{enumerate}[font = \upshape, label = (\alph*)]
		\item $V$ is a stationary integral varifold,
		\item $\nv(U) = \liminf_{i \to \infty} \frac{1}{2 \sigma} E_{\eps_i}(u_i),$
		\item for all $\phi \in C_c(U)$:
		\begin{equation}
		\label{eq:thm_properties_V_equipartition_energy}
		\lim_{i \to \infty}	\int_U
		\epsilon_i \frac{\abs{\nabla u_i}^2}{2} \phi
		= 	\lim_{i \to \infty} \int_U \frac{W(u_i)}{\epsilon_i} \phi
		= 	\lim_{i \to \infty} \int_U \abs{\nabla w_i} \phi,
		\end{equation}
		where $w_i := \Psi \circ u_i$ and
		$\Psi(t) := \int_{0}^{t} \sqrt{W(s)/2} \intdiff s$.
		
	\end{enumerate}
\end{thm}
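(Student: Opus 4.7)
The result is attributed to Hutchinson--Tonegawa, so rather than attempting to rewrite their (long) proof, I will sketch the overall architecture one would follow.

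\textbf{Extraction of the subsequential limit.} Hypothesis \ref{hypB} and the weight-measure bound
\[
\norm{V^i}(U) \leq \frac{E_0}{2\sigma}
\]
derived in~\eqref{eq:density_weight_measure} give a uniform mass bound on the varifolds $V^i$. By the compactness of Radon measures on $G_n(U)$, a subsequence converges weakly to some varifold $V$. The first task is therefore to identify this limit.

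\textbf{Equipartition of energy (part (c)).} The strategy is to analyse the \emph{discrepancy function}
\[
\xi_i := \epsilon_i \frac{\abs{\nabla u_i}^2}{2} - \frac{W(u_i)}{\epsilon_i}.
\]
A direct computation using the Allen--Cahn equation~\eqref{eq:epsilon_allen_cahn_equation} shows that $\xi_i$ satisfies a good elliptic differential inequality on $\{\nabla u_i \neq 0\}$ (a Modica-type identity in the Euclidean setting, modified by curvature terms on $M$). Together with the $L^\infty$ bound on $u_i$ one deduces $\limsup_i \xi_i \leq 0$ pointwise via a maximum-principle argument; combined with the mass bound this forces $\xi_i \rightharpoonup 0$ as a signed measure. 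The coarea formula applied to $w_i = \Psi \circ u_i$ then gives the third equality in~\eqref{eq:thm_properties_V_equipartition_energy}. Part (b) follows from part (c) by summing the two energy densities and taking a $\liminf$.

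\textbf{Stationarity (first half of part (a)).} The key identity is the divergence-free property of the Allen--Cahn stress-energy tensor. Writing
\[
T^i_{kl} = \Bigl(\epsilon_i\frac{\abs{\nabla u_i}^2}{2} + \frac{W(u_i)}{\epsilon_i}\Bigr) g_{kl} - \epsilon_i \nabla_k u_i \nabla_l u_i,
\]
a calculation using~\eqref{eq:epsilon_allen_cahn_equation} shows $\nabla^k T^i_{kl}$ is controlled by curvature terms on $M$. Contracting against an arbitrary $X \in C_c^1(U,TM)$ and integrating by parts produces the first-variation formula for $V^i$ modulo an error of order $\int_U \xi_i \,\mathrm{div}\, X$. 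By the equipartition of energy just established, this error vanishes in the limit, so $\delta V(X) = 0$.

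\textbf{Integrality (the hard part).} Rectifiability and integrality of $V$ are the deepest part of the theorem; this is where the real work in~\cite{HutchinsonTonegawa00} lies. The approach is a blow-up analysis at $\nv$-a.e.~point $x \in \spt \nv$: one rescales $u_i$ at scale $\epsilon_i$ about $x$, and using the monotonicity formula (available thanks to stationarity) together with the stress-energy identity, one shows that along any blow-up sequence $u_i$ converges, up to translations parallel to an $n$-plane $P$, to the one-dimensional heteroclinic profile $\bar u(t)$ solving $\bar u'' = W'(\bar u)$. Because the energy of this profile is exactly $2\sigma$, the normalisation in~\eqref{eq:definition_varifolds} forces the density of $\norm{V}$ at $x$ to be a positive integer (the number of transitions observed in the blow-up). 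Combining this with Preiss's rectifiability theorem (or a direct $n$-rectifiability argument using the tangent plane identification) yields that $V$ is integral. The principal obstacle is exactly this blow-up step: one must upgrade weak convergence of varifolds to strong enough control of the level sets to recognise the 1D profile, and this is where the subtle PDE estimates of Hutchinson--Tonegawa (in particular their quantitative monotonicity and density lower bounds) are essential.
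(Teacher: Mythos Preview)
The paper does not prove this theorem at all: it is stated with attribution to \cite{HutchinsonTonegawa00} and used as a black box throughout. There is therefore no ``paper's own proof'' against which to compare your proposal. You correctly recognised this and gave an outline of the Hutchinson--Tonegawa argument rather than a self-contained proof, which is the appropriate response.

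Your sketch of their architecture is broadly accurate. One small correction: the vanishing of the discrepancy $\xi_i$ is not established by a pointwise maximum-principle argument of Modica type (that works cleanly only in the Euclidean setting for minimisers or under additional hypotheses); in \cite{HutchinsonTonegawa00} it instead comes from a monotonicity-formula argument that controls $\int \xi_i^+$ in terms of quantities that vanish as $\epsilon_i \to 0$, combined with the trivial bound $\int \xi_i^- \leq E_0$. This is a technical point but worth flagging, since the maximum-principle route would not obviously survive on a curved ambient manifold.
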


Up to a factor of $\eps_i$ the second variation $\delta^2 E_{\eps_i}$
corresponds to the second-order elliptic operator
$L_i := \Delta - \eps_i^{-2} W''(u_i)$. As in the discussion for
the Jacobi operator, $L_i$  
has discrete spectrum $\lambda_1^i \leq \lambda_2^i \leq \cdots \to +\infty$,
which we denote by $\lambda_p^i(W)$ when we want to
emphasise its dependence on the subset $W$.
The following theorem is our main result.
\begin{customthm}{A}
\label{thm:main_thm_propagation_of_index_bounds}
Let $M^{n+1}$ be a closed Riemannian manifold, and $U \subset M$ an open subset.
Let $(u_i)$ be a sequence in $C^3(U)$ satisfying Hypotheses \ref{hypA},
\ref{hypB} and \ref{hyp:C_index_bound}, and $\vei \rightharpoonup V$.
Then $\dimh \sing V \leq n-7$ and 
\begin{enumerate}[font = \upshape, label = (\roman*)]	
	\item $\lambda_p(W) \geq \limsup_{i \to \infty} \lambda_p^i(W)$
	for all open $W \subset \subset U \setminus \sing V$ with
	$W \cap \reg V \neq \emptyset$ and all $p \in \NN$,
	\label{item:main_thm_spec}
	\item $\ind B_V \leq k$. \label{item:main_thm_ind_reg}
\end{enumerate}
\end{customthm}
\begin{rem}
The spectral lower bound remains true
if the assumptions are weakened and one assumes that for some $k \in \NN$
there is $\mu \in \RR$ such that
\begin{equation}
\lambda_k^i(U) \geq \mu \quad \text{for all $i \in \NN$}
\end{equation}
instead of an index bound---this observation is inspired
the work of Ambrozio--Carlotto--Sharp~\cite{AmbrozioCarlottoSharp2015}, where
a similar generalisation is made in the context of minimal surfaces.
One obtains the spectral bound via an inductive argument on $k$ similar to the argument
in Section~\ref{sec:proof_main_thm}, noting for the base case of
the induction that
bounds as in Corollary~\ref{cor:L2_bound_for_Ae_and_Be_if_u_stable}
hold if $\lambda_1^i \geq \mu$. 
\end{rem}
The following corollary is an immediate consequence of
Theorem~\ref{thm:main_thm_propagation_of_index_bounds}.
\begin{customcor}{B}
\label{cor:main_thm_morse}
If $\reg V$ is two-sided, then its Morse index with respect to the
area functional satisfies $\ind_{\calH^n} \reg V \leq k$.
\end{customcor}
If $V$ is the stationary varifold arising from Guaraco's $1$-parameter
min-max construction~\cite{Guaraco15} (resp.\ from the $k$-parameter min-max construction of Gaspar--Guaraco~\cite{GasparGuaraco2016})
and its regular part is two-sided, then by Corollary~\ref{cor:main_thm_morse}
its Morse index is at most $1$ (resp.\ at most $k$).

\section{Preliminary results}
\label{section:preliminary_results}
The preliminary results are divided into three parts. In the first,
following~\cite{Tonegawa05}
we introduce `second fundamental forms' $A^i$ for the varifolds $V^i$
and relate them to the second variation of the Allen--Cahn functional.
The last two sections are dedicated to the spectra of the operators
$L_V = \Delta_V + \abs{A}^2 + \Ric_M(\nu,\nu)$ and $L_i = \Delta - \eps_i^{-2} W''(u_i)$.

\subsection{Stability and \texorpdfstring{$L^2$}{L2}--bounds of curvature}
\label{subsec:generalised_second_ff}
To simplify the discussion fix for the moment a
critical point $u \in C^3(U) \cap L^\infty(U)$ of the Allen--Cahn functional
$E_\eps$, with associated varifold $V^\eps$
defined by \eqref{eq:definition_varifolds}.

Let $x \in U$ be a regular point of $u$, that is $\nabla u(x) \neq 0$.
In a small enough neighbourhood 
of $x$, the level set $\{ u = u(x)\}$ is embedded in $M$.
Call $\Sigma \subset M$ this embedded portion of the
level set, and let $A^\Sigma$ be its second fundamental form.
We use this to define a `second fundamental form' for $V^\eps$.
\begin{defn}
\label{defn:second_ff}
The function $A^\eps$ is defined at all $x \in U$ where $\nabla u(x) \neq 0$
by $A^\eps(x) = A^\Sigma(x)$.
\end{defn}
\begin{rem}
Second fundamental forms can be generalised to the context
of varifolds via the integral identity \eqref{eq:gen_second_ff}---see
Appendix~\ref{app:second_fundamental_form_coordinate_expression}, or
\cite{Hutchinson86} for the original account of this theory.
Strictly speaking it is an abuse of language to call
$A^\eps$ the `second fundamental form' of
$V^\eps$, as it satisfies this identity only up to a small error term
\eqref{eq:approximate_identity_second_fundamental_form}.
\end{rem}
By definition $\nabla_X Y = \nabla^\Sigma_X Y + A^\Sigma(X,Y)$ for all
$X,Y \in C^1(T\Sigma)$.
Making implicit use of the musical isomorphisms here and throughout the text,
write $\nu^\eps(x) = \frac{\nabla u(x)}{\abs{\nabla u(x)}}$, so that
\begin{equation}
A^\Sigma(X,Y) = \langle \nabla_X Y,\nu^\eps \rangle \nu^\eps
= - \langle Y, \nabla_X \nu^\eps \rangle \nu^\eps.
\end{equation}
\begin{lem}
	\label{lem:second_ff_ae_bound}
	Let $x \in U$ be a regular point of $u$. 
	Then
	\begin{equation}
	\label{eq:ae_bounded_hessian}
	\abs{\Ae}(x)^2
	\leq \frac{1}{\abs{\nabla u}^2(x)}
	(\abs{\nabla^2 u}^2(x) - \abs{\nabla \abs{\nabla u}}^2(x)),
	\end{equation}
	where $\nabla^2 u(x)$ is the Hessian of $u$ at $x$.
\end{lem}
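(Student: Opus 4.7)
The plan is to evaluate $\abs{\Ae}^2$ directly from the formula $A^\Sigma(X,Y) = -\langle Y, \nabla_X \nu^\eps\rangle \nu^\eps$ and compare it term-by-term with an orthogonal decomposition of $\abs{\nabla^2 u}^2$ in an orthonormal frame adapted to $\Sigma$. Concretely, at the regular point $x$ I would set $\nu = \nu^\eps(x) = \nabla u(x)/\abs{\nabla u(x)}$, choose an orthonormal basis $\{e_1,\ldots,e_n\}$ of $T_x\Sigma$, and extend by $\nu$ to an orthonormal basis of $T_xM$.

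First I would differentiate $\nu$ at $x$ to get
\[
\nabla_X \nu = \abs{\nabla u}^{-1}\nabla_X \nabla u - \abs{\nabla u}^{-2}X(\abs{\nabla u})\,\nabla u,
\]
and pair with a tangent vector $Y \in T_x\Sigma$: the second summand drops out because $\langle \nabla u, Y\rangle = 0$, leaving $\langle Y, \nabla_X \nu\rangle = \abs{\nabla u}^{-1}\nabla^2 u(X,Y)$. Summing squares over the tangential basis,
\[
\abs{\Ae}^2 = \sum_{i,j=1}^n \langle e_j, \nabla_{e_i}\nu\rangle^2 = \frac{1}{\abs{\nabla u}^2}\sum_{i,j=1}^n \nabla^2 u(e_i,e_j)^2.
\]
In parallel I would differentiate $\abs{\nabla u}^2 = \langle \nabla u,\nabla u\rangle$ to obtain $X(\abs{\nabla u}^2) = 2\nabla^2 u(X,\nabla u)$, hence the pointwise identity $\nabla_X \abs{\nabla u} = \nabla^2 u(X,\nu)$ at $x$, so that
\[
\abs{\nabla \abs{\nabla u}}^2 = \sum_{i=1}^n \nabla^2 u(e_i,\nu)^2 + \nabla^2 u(\nu,\nu)^2.
\]

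Expanding $\abs{\nabla^2 u}^2$ in the adapted orthonormal basis and subtracting the expression above for $\abs{\nabla \abs{\nabla u}}^2$ yields
\[
\abs{\nabla^2 u}^2 - \abs{\nabla \abs{\nabla u}}^2 = \abs{\nabla u}^2 \abs{\Ae}^2 + \sum_{i=1}^n \nabla^2 u(e_i,\nu)^2 \geq \abs{\nabla u}^2 \abs{\Ae}^2,
\]
which is the claim (with equality precisely when the mixed tangential-normal Hessian entries $\nabla^2 u(e_i,\nu)$ vanish). There is really no substantive obstacle; the only Riemannian ingredients are the symmetry of $\nabla^2 u$ and the identity $\langle \nabla_X \nabla u, Y\rangle = \nabla^2 u(X,Y)$, both of which hold pointwise at $x$, so the computation proceeds as it would in $\mathbb{R}^{n+1}$.
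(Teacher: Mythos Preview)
Your proof is correct and follows essentially the same approach as the paper: both compute $A^\Sigma$ from $\nabla \nu^\eps$, identify $\nabla\abs{\nabla u}$ with $\nabla^2 u(\cdot,\nu)$, and then decompose $\abs{\nabla^2 u}^2$ according to the splitting $T_xM = T_x\Sigma \oplus N_x\Sigma$, the leftover nonnegative term being $\sum_i \nabla^2 u(e_i,\nu)^2 = \abs{\nabla^2 u\vert_{T\Sigma \otimes N\Sigma}}^2$. The only difference is cosmetic: you work componentwise in an adapted orthonormal frame, while the paper phrases the same computation in coordinate-free tensor language.
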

\begin{proof}
	The second fundamental form $A^\Sigma$ is expressed in terms of the
	covariant derivative $\nabla \nu^\eps$ by
	\begin{equation}
	A^\Sigma = - \srestr{\nabla \nu^\eps}{T\Sigma \otimes T\Sigma} \otimes \nu^\eps.
	\end{equation}
	We can express $\nabla \nu^\eps$ as
	\begin{equation}
	\nabla \nu^\eps =
	\frac{\nabla^2 u}{\abs{\nabla u}} -
	\nu^\eps \otimes \frac{\nabla \abs{\nabla u}}{\abs{\nabla u}},
	\end{equation}
	whence after restriction to $T\Sigma \otimes T\Sigma$ we get
	\begin{equation}
	A^\Sigma =
	- \frac{1}{\abs{\nabla u}}
	\srestr{\nabla^2 u}{T\Sigma \otimes T\Sigma}
	\otimes \nu^\eps.
	\end{equation}
	On the other hand $\nabla \abs{\nabla u} = \langle \nabla^2 u, \nu^\eps \rangle$
	where $\nabla u \neq 0$, so after decomposing the Hessian $\nabla^2 u$
	in terms of its action on $T\Sigma$ and $N\Sigma$, we obtain
	\begin{equation}
	\abs{\nabla^2 u}^2 - \abs{\nabla \abs{\nabla u}}^2
	= \abs{\nabla u}^2 \abs{A^\Sigma}^2 +
	\abs{\srestr{\nabla^2 u}{T\Sigma \otimes N\Sigma}}^2
	\geq \abs{\nabla u}^2 \abs{A^\eps}^2. \qedhere
	\end{equation}
\end{proof}

When considering the
second variation, it somewhat simplifies notation to rescale the energy as
$\calE_\eps = \eps^{-1} E_\eps$.
Its second variation is
$ \delta^2 \calE_\eps(u)(\phi,\phi) =
\int_U \abs{\nabla \phi}^2 + \frac{W''(u)}{\eps^2} \phi^2, $
defined for all $\phi \in C_c^1(U)$, which by a density argument can be extended to $H_0^1(U)$.
The following identity will be useful throughout; a proof can be found
in either of the indicated sources.
\begin{lem}[\cite{Farina13,Tonegawa05}]
\label{lem:expr_second_variation}
Let $u \in C^3(U) \cap L^\infty(U)$ be a critical point of $E_\eps$.
For all $\phi \in C_c^1(U)$,
\begin{multline}
	\label{eq:second_variation_rescaled_expression}
	\delta^2 \calE_\eps(u)(\abs{\nabla u} \phi,\abs{\nabla u} \phi)
	= \\	\int_{U} \abs{\nabla u}^2 \abs{\nabla \phi}^2
	- \left(\abs{\nabla^2 u}^2 -
	\abs{\nabla \abs{\nabla u}}^2 + \Ric_M(\nabla u,\nabla u) \right)
	\phi^2 \dH^{n+1}.
\end{multline}
\end{lem}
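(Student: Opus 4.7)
The plan is to expand the left-hand side by the product rule, use the Euler--Lagrange equation together with Bochner's identity to convert the $W''(u)$-term into geometric quantities, and then integrate by parts. Setting $\psi = \abs{\nabla u}\phi$, the identity we want is really an integrated pointwise identity between the two integrands, so the proof should reduce to tracking how the cross terms arising from the product rule on $\nabla \psi$ cancel against the terms produced by the Bochner step.

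Concretely, I would begin by computing, for the rescaled energy $\calE_\eps$,
\begin{equation*}
\delta^2 \calE_\eps(u)(\psi,\psi) = \int_U \abs{\nabla \psi}^2 + \frac{W''(u)}{\eps^2} \psi^2 \dH^{n+1},
\end{equation*}
and expanding
\begin{equation*}
\abs{\nabla \psi}^2 = \abs{\nabla u}^2 \abs{\nabla \phi}^2 + 2 \phi \abs{\nabla u} \langle \nabla \abs{\nabla u}, \nabla \phi \rangle + \phi^2 \abs{\nabla \abs{\nabla u}}^2.
\end{equation*}
For the potential term, differentiate the Euler--Lagrange equation $\eps^2 \Delta u = W'(u)$ and pair with $\nabla u$ to obtain $W''(u) \abs{\nabla u}^2 = \eps^2 \langle \nabla u, \nabla \Delta u \rangle$. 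Bochner's identity for a function on $(M,g)$ then gives
\begin{equation*}
\langle \nabla u, \nabla \Delta u \rangle = \frac{1}{2} \Delta \abs{\nabla u}^2 - \abs{\nabla^2 u}^2 - \Ric_M(\nabla u, \nabla u),
\end{equation*}
whence $\eps^{-2} W''(u) \psi^2 = \phi^2 \bigl( \frac{1}{2} \Delta \abs{\nabla u}^2 - \abs{\nabla^2 u}^2 - \Ric_M(\nabla u, \nabla u) \bigr)$.

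The final step is to absorb the $\frac{1}{2} \phi^2 \Delta \abs{\nabla u}^2$ term via integration by parts (legitimate since $\phi \in C_c^1(U)$ and $u \in C^3(U)$): it produces $-2 \int_U \phi \abs{\nabla u} \langle \nabla \abs{\nabla u}, \nabla \phi\rangle \dH^{n+1}$, which exactly cancels the cross term from the expansion of $\abs{\nabla \psi}^2$. Collecting the surviving contributions yields the claimed identity \eqref{eq:second_variation_rescaled_expression}. The only non-routine ingredient is the Bochner identity, which is also the source of the Ricci term in the formula; the rest of the argument consists of bookkeeping and a standard integration by parts, so I do not expect any serious obstacle beyond making sure the cross-term cancellation is tracked carefully.
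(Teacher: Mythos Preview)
Your argument is correct: expanding $\abs{\nabla(\abs{\nabla u}\phi)}^2$, replacing $W''(u)\abs{\nabla u}^2$ via the differentiated Euler--Lagrange equation and Bochner's formula, and integrating $\tfrac{1}{2}\phi^2\Delta\abs{\nabla u}^2$ by parts gives exactly the claimed identity after the cross-term cancellation. The paper does not supply its own proof here but simply cites \cite{Farina13,Tonegawa05}; your Bochner-based computation is the standard argument used in those references.
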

Combining \eqref{eq:second_variation_rescaled_expression} 
with the $\nve$-a.e. bound \eqref{eq:ae_bounded_hessian} yields
for all $\phi \in C_c^1(U)$
\begin{equation}
\label{eq:second_inequality_stability}
\frac{\eps}{2 \sigma}	
\delta^2 \calE_{\eps}(u)(\abs{\nabla u}\phi,\abs{\nabla u}\phi)
\leq
\int \abs{\nabla \phi}^2
- (\abs{A^\eps}^2 + \Ric_M(\nu^\eps,\nu^\eps)) \phi^2 \dnve.
\end{equation}
When $u$ is stable, that is when $\delta^2 \calE_\eps(u)$ is non-negative,
then this identity yields $L^2(V^\eps)$--bounds for $A^\eps$.
\begin{cor}
	\label{cor:L2_bound_for_Ae_and_Be_if_u_stable}
	There is a constant $C = C(M) > 0$ such that if $u \in C^3(U) \cap L^\infty(U)$
	is a critical point of $E_\eps$ and is stable in an open ball $B(x,r) \subset U$
	of radius $r \leq 1$ then
	\begin{equation}
	\label{eq:bound_in_cor_stability_in_balls}
	\int_{B(x,\frac{r}{2})} \abs{\Ae}^2  \dnve
	\leq \frac{C}{r^2}  \norm{\ve}(B(x,r)).
	\end{equation}
\end{cor}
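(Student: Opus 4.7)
The approach is to plug the stability assumption directly into the inequality~\eqref{eq:second_inequality_stability} obtained in the preceding discussion, and then make a standard cutoff choice.

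First I would use the following observation: since $u$ is stable in $B(x,r)$, the second variation $\delta^2 \mathcal{E}_\eps(u)(|\nabla u|\phi,|\nabla u|\phi)$ is nonnegative for every $\phi \in C_c^1(B(x,r))$. Applying~\eqref{eq:second_inequality_stability} with such a $\phi$ yields
\begin{equation*}
\int \bigl(\abs{A^\eps}^2 + \Ric_M(\nu^\eps,\nu^\eps)\bigr)\phi^2 \, d\norm{V^\eps}
\leq \int \abs{\nabla \phi}^2 \, d\norm{V^\eps}.
\end{equation*}
Since $M$ is closed, there is a constant $C_1 = C_1(M)$ such that $|\Ric_M| \leq C_1$ everywhere, hence
\begin{equation*}
\int \abs{A^\eps}^2 \phi^2 \, d\norm{V^\eps}
\leq \int \abs{\nabla \phi}^2 \, d\norm{V^\eps} + C_1 \int \phi^2 \, d\norm{V^\eps}.
\end{equation*}

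Next I would choose a standard cutoff $\phi \in C_c^1(B(x,r))$ with $0 \leq \phi \leq 1$, $\phi \equiv 1$ on $B(x,r/2)$ and $|\nabla \phi| \leq C_2/r$ for some universal $C_2$ (the existence of such $\phi$ on the Riemannian manifold $M$ is a routine partition-of-unity construction, and one may first take a Lipschitz cutoff and then smooth by a density argument). Substituting gives
\begin{equation*}
\int_{B(x,r/2)} \abs{A^\eps}^2 \, d\norm{V^\eps}
\leq \Bigl(\frac{C_2^2}{r^2} + C_1\Bigr) \norm{V^\eps}(B(x,r)).
\end{equation*}

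Finally, because we assume $r \leq 1$, we can absorb $C_1$ into $C_1/r^2$ and set $C = C_2^2 + C_1$, which depends only on $M$, to conclude the desired estimate~\eqref{eq:bound_in_cor_stability_in_balls}. There is no serious obstacle in this argument: the nontrivial content is already contained in Lemma~\ref{lem:second_ff_ae_bound} and Lemma~\ref{lem:expr_second_variation}, which together produce the inequality~\eqref{eq:second_inequality_stability}; what remains is the standard Simons-type cutoff trick familiar from the analogous $L^2$ curvature estimates for stable minimal hypersurfaces.
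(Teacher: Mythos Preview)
Your proposal is correct and follows essentially the same approach as the paper: bound the Ricci term by a constant depending only on $M$, plug a standard cutoff $\eta \in C_c^1(B(x,r))$ with $\eta \equiv 1$ on $B(x,r/2)$ and $|\nabla \eta| \lesssim r^{-1}$ into~\eqref{eq:second_inequality_stability}, and use $r \leq 1$ to absorb the zeroth-order term. The paper's version is slightly terser but the argument is identical.
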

\begin{proof}
	The Ricci curvature
	term in \eqref{eq:second_inequality_stability} can be bounded by some
	constant $C(M) \geq 1$ as the ambient manifold $M$ is closed, so
	$ \int_{B(x,r)} \abs{\Ae}^2 \phi^2  \dnve
	\leq C(M) \int \phi^2 + \abs{\nabla \phi}^2 \dnve $
	for all $\phi \in C_c^1(B(x,r))$. Plug in a cut-off function
	$\eta \in C_c^1(B(x,r))$ with $\eta = 1$ in $B(x,\frac{r}{2})$ and
	$\abs{\nabla \eta} \leq 3r^{-1}$ to obtain the desired inequality.
\end{proof}
We now turn to a sequence $(u_i)$ of critical points satisfying
Hypotheses~\ref{hypA}--\ref{hyp:C_index_bound}.
If the $u_i$ are stable in a ball as in
Corollary~\ref{cor:L2_bound_for_Ae_and_Be_if_u_stable},
then the uniform weight bounds \eqref{eq:density_weight_measure}
imply uniform $L^2(V^i)$--bounds of the second fundamental forms,
which we denote $A^i$ from now on.
Under these conditions the $A^i$ converge
weakly to the second fundamental form $A$ (in the classical, smooth
sense) of $\reg V$.
\begin{prop}
	\label{prop:weak_convergence_second_ff}
	Let $W \subset \subset U \setminus \sing V$ be open with
	$W \cap \reg V \neq \emptyset$.
	If $\sup_i \int_W \abs{\Aei}^2 \dnvi < +\infty$,
	then passing to a subsequence $\Aei \dvei \rightharpoonup A \dv$ weakly as Radon
	measures on $G_n(W)$, and
	\begin{equation}
	\label{eq:bound_fatou_second_ff}
	\int_W \abs{A}^2 \dnv \leq
	\liminf_{i \to \infty} \int_W \abs{\Aei}^2 \dnvi,
	\end{equation}
	where $A$ is the second fundamental form of $\reg V \subset M$.
\end{prop}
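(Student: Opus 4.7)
The plan is to view $A^i \intdiff \|V^i\|$ and $|A^i|^2 \intdiff \|V^i\|$ as Radon measures on $G_n(W)$ (the first one is tensor-valued), extract weakly convergent subsequences by the $L^2$--bound assumed in the statement, and then identify the limits on the smooth piece $W$ by means of an integration-by-parts (Hutchinson-type) identity that the approximate second fundamental forms $A^i$ satisfy up to a small error term, as referenced in the remark following Definition~\ref{defn:second_ff}.

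More concretely, first I would combine the uniform bound $\sup_i \int_W |A^i|^2 \dnvi < \infty$ with the uniform weight bound \eqref{eq:density_weight_measure} and Cauchy--Schwarz to see that the total-variation norms of the signed measures $A^i \dvi$ on $G_n(W)$ are bounded by $\bigl( \int_W |A^i|^2 \dnvi\bigr)^{1/2} \|V^i\|(W)^{1/2}$, hence uniformly bounded. By compactness of bounded Radon measures I can pass to a subsequence along which $A^i \dvi \rightharpoonup \mu$ and $|A^i|^2 \dvi \rightharpoonup \tau$ for some tensor-valued Radon measure $\mu$ and some nonnegative Radon measure $\tau$ on $G_n(W)$.

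Next I would identify $\mu = A \dv$ on $W$. Since $W \cap \sing V = \emptyset$, the support of $V$ inside $W$ is contained in a smooth embedded hypersurface with locally constant integer multiplicity (by integrality and Allard regularity). For any $X \in C_c^1(W, TM)$, the $A^i$ satisfy an integral identity of the form \eqref{eq:gen_second_ff} up to an error that vanishes as $\eps_i \to 0$ (this is precisely the content of \eqref{eq:approximate_identity_second_fundamental_form}, reproved in Appendix~\ref{app:second_fundamental_form_coordinate_expression} following Tonegawa). Passing to the limit in this identity using $V^i \rightharpoonup V$ and $A^i \dvi \rightharpoonup \mu$ shows that $\mu$ realises the generalised second fundamental form of $V$ tested against $X$; on the smooth set $\reg V \cap W$ this generalised second fundamental form coincides with the classical $A$, weighted by the multiplicity of $V$, so $\mu = A \dv$ as measures on $G_n(W)$.

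Finally, for the lower semicontinuity \eqref{eq:bound_fatou_second_ff}, I would argue that $\tau \geq |A|^2 \dv$ as measures on $G_n(W)$. For this it suffices, for any nonnegative $\phi \in C_c(W)$, to bound
\begin{equation*}
\int \phi |A|^2 \dnv = \sup_{\psi} \left( 2 \int \phi \, \langle \psi, A\rangle \dnv - \int \phi |\psi|^2 \dnv \right),
\end{equation*}
where $\psi$ ranges over smooth compactly supported symmetric $2$-tensors with values in $NV$; for each such $\psi$ the right-hand side is the limit of the analogous quantities for $A^i$ (by the two weak convergences already established, together with $\|V^i\| \rightharpoonup \nv$), each of which is bounded above by $\int \phi |A^i|^2 \dnvi$, giving the desired $\liminf$ bound upon letting $\phi \nearrow \mathbf 1_W$ through a monotone sequence. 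The main obstacle is the identification step $\mu = A \dv$: it is where smoothness of $\reg V$ in $W$ and the specific structure of Hutchinson's integral definition of second fundamental forms enter crucially, and it is the step that truly relies on adapting Tonegawa's stable-case argument, whereas the compactness and Fatou steps are more routine measure-theoretic manoeuvres.
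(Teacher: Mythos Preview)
Your proposal follows essentially the same route as the paper: extract weak limits by the $L^2$-bound via measure-function compactness (the paper's Lemma~\ref{lem:measure_function_convergence}), identify the limit on $\reg V \cap W$ via Hutchinson's integral identity~\eqref{eq:approximate_identity_second_fundamental_form} and the uniqueness statement of Corollary~\ref{cor:uniqueness_gen_curv}, and read off the lower semicontinuity~\eqref{eq:bound_fatou_second_ff} from the Fatou-type bound~\eqref{eq:measure_function_convergence_L2_bound} (your dual characterisation with test tensors $\psi$ is just this argument unpacked).

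One technical point the paper makes explicit and you elide: the approximate identity~\eqref{eq:approximate_identity_second_fundamental_form} is written in terms of the auxiliary tensor $B^{\eps_i}_{jkl} = S_{jr}\partial_r S^{\eps_i}_{kl}$, not $A^{\eps_i}$ itself, so to pass to the limit in it you need weak convergence of $B^{\eps_i}\dvi$, hence an $L^2(V^i)$-bound on $B^{\eps_i}$. The paper obtains this from the algebraic relation $B^{\eps_i}_{jkl} = A^{l,\eps_i}_{jk} + A^{k,\eps_i}_{jl} + (\text{terms in }DP)$, which gives $|B^{\eps_i}|^2 \leq 8(|A^{\eps_i}|^2 + |DP|^2)$ with $|DP|$ bounded on the closed manifold $M$. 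Once $B^{\eps_i}\dvi \rightharpoonup B\dv$ and $B$ is identified with $B^{\reg V}$ via the limiting identity and uniqueness, the relation $A^{\eps_i}_{jk}{}^l = P_{lr}S_{ks}B^{\eps_i}_{jrs}$ (with $P,S$ continuous on $G_n(W)$) passes to the limit and gives $\mu = A^{\reg V}\dv$. This intermediate passage through $B$ is the only substantive step missing from your sketch.
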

The weak subsequential convergence follows immediately from compactness
of Radon measures; the main difficulty is to show that the weak limit is $A \dv$.
The proof is a straight-forward adaptation of the argument
used for the stable case in~\cite{Tonegawa05}; we present a
complete argument in Appendix~\ref{app:second_fundamental_form_coordinate_expression}
for the reader's convenience.

\subsection{Spectrum of \texorpdfstring{$L_V$}{LV} and weighted min-max}
\label{sec:prelim_spectrum_sec_var}

Throughout we restrict ourselves to a compactly contained open
subset $W \subset \subset U \setminus \sing V$
to avoid blow-up of the coefficients of $L_V$ near the singular set,
and assume $W \cap \reg V \neq \emptyset$ to avoid vacuous statements.
As $W \cap \reg V$ is compactly contained in $\reg V$, it can
intersect only finitely many connected components $C_1,\dots,C_N$
of $\reg V$.
By the constancy theorem~\cite[Thm.~41.1]{Simon84}
the multiplicity function $\Theta$ of a stationary
integral varifold $V$ is constant on every connected component of $\reg V$;
we write $\Theta_1,\dots,\Theta_N$ for the respective multiplicities
of $C_1,\dots,C_N$.

By classical theory for elliptic PDE~\cite[Ch.~8]{GilbargTrudinger98},
the spectrum of $L_V$ has the following min-max characterisation:
\begin{equation}
\label{eq:unw_min_max}
\lambda_p = \inf_{\dim S = p} \max_{\phi \in S \setminus \{ 0 \}}
\frac{B_{\reg V}(\phi,\phi)}{\norm{\phi}^2_{L^2}}
\quad \text{for all $p \in \NN$},
\end{equation}
where the infimum is taken over linear subspaces $S$
of $H_0^1$ (recall this is our
abbreviated notation for $H_0^1(W \cap \reg V)$).
From this we easily obtain a min-max characterisation that is `weighted' by
the multiplicities  $\Theta_1,\dots,\Theta_N$ in the sense that
\begin{equation}
\label{eq:w_min_max}
\lambda_p = \inf_{\dim S = p} \max_{\phi \in S \setminus \{0 \}}
\frac{B_V(\phi,\phi)}{\norm{\phi}^2_{L^2(V)}}
\quad \text{for all $p \in \NN$}.
\end{equation}
To see this, observe the following: as functions $\phi \in H_0^1$
vanish near the boundary of every connected component $C \subset \reg V$,
the function $\phi_C$ on $W \cap \reg V$ defined by
\begin{equation}
\phi_C = \begin{cases}
\phi &\text{on }  C \\
0 & \text{on } W \cap \reg V \setminus C
\end{cases}
\end{equation} also belongs to $H_0^1$.
Moreover 
\begin{equation}
B_V(\phi_C,\phi_C)
= \Theta_C B_{\reg V}(\phi_C,\phi_C)
\text{ and }
\norm{\phi_C}^2_{L^2(V)}
= \Theta_C \norm{\phi_C}^2_{L^2},
\end{equation}
where $\Theta_C$ denotes the multiplicity of $C$.
We then define a linear isomorphism of $H_0^1$ via normalisation
by the respective multiplicities of the components. This sends
$\phi \mapsto \bar{\phi} := \sum_{j=1}^N \Theta_j^{-1/2} \phi_{C_j}$; then
\begin{equation}
\frac{B_V(\bar{\phi},\bar{\phi})}{\norm{\bar{\phi}}_{L^2}^2}
= \frac{B_{\reg V}(\phi,\phi)}{\norm{\phi}_{L^2(V)}^2}.
\end{equation}
Therefore the `unweighted' and `weighted' min-max
characterisations \eqref{eq:unw_min_max} and \eqref{eq:w_min_max} are in fact equivalent.
In the remainder we mainly use \eqref{eq:w_min_max}, and
abbreviate this as $\lambda_p = \inf_{\dim S = p} \max_{S \setminus \{0\}} J_V$,
where $J_V$ denotes the `weighted' Rayleigh quotient
\begin{equation}
J_V(\phi) = \frac{B_V(\phi,\phi)}{\norm{\phi}_{L^2(V)}^2}
\text{\quad for all $\phi \in H_0^1 \setminus \{0\}$}.
\end{equation}

The min-max characterisation implies the following lemma, which
highlights the dependence of the spectrum $\lambda_p(W)$ on
the subset $W$.
\begin{lem}
	\label{lem:shrinking_balls_spectrum}
	\leavevmode
	\begin{enumerate}[label = (\alph{*}), font = \upshape]
		\item If $W_1 \subset W_2 \subset \subset U \setminus \sing V$, then
		$\lambda_p(W_1) \geq \lambda_p(W_2)$: the spectrum
		is monotone decreasing.
		\item \label{item:index_superadd} If $W_1,W_2 \subset \subset U \setminus \sing V$
		have $W_1 \cap W_2 = \emptyset$, then
		$\ind_{W_1} B_V + \ind_{W_2} B_V = \ind_{W_1 \cup W_2} B_V$.
		\item \label{item:shrinking_balls}
		If $W \subset \subset U \setminus \sing V$ and $y \in W \cap \reg V$, then
		$\lambda_p(W) = \lim_{R \to 0} \lambda_p(W \setminus \overline{B}(y,R))$.
	\end{enumerate}
	
\end{lem}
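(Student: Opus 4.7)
The plan is to derive parts (a) and (b) directly from the weighted min-max characterisation~\eqref{eq:w_min_max}, and to establish (c) by a cut-off approximation whose essential content is that regular points carry zero $H^1$-capacity. For part (a), extension by zero realises $H_0^1(W_1 \cap \reg V)$ as a subspace of $H_0^1(W_2 \cap \reg V)$ that preserves both $B_V$ and $\norm{\cdot}_{L^2(V)}$; every $p$-dimensional test subspace for $\lambda_p(W_1)$ is therefore admissible for $\lambda_p(W_2)$ with the same Rayleigh quotient $J_V$, and taking the infimum over a smaller family can only give a larger value.

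For part (b), disjointness of the open sets $W_1,W_2$ produces the algebraic direct sum
\begin{equation*}
H_0^1\bigl((W_1 \cup W_2) \cap \reg V\bigr) = H_0^1(W_1 \cap \reg V) \oplus H_0^1(W_2 \cap \reg V),
\end{equation*}
since any $\phi$ on the union splits as $\phi|_{W_1} + \phi|_{W_2}$, each summand in $H_0^1$ because $W_1$ is both open and closed inside the open union $W_1 \cup W_2$ (so an approximating sequence in $C_c^1$ restricts to approximating sequences on each factor). The form $B_V$ is block diagonal with respect to this splitting because functions from different factors have disjoint support and all cross-terms vanish; the maximal $B_V$-negative-definite subspace therefore splits as a direct sum of such subspaces for each factor, yielding the claimed additivity.

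For part (c), monotonicity from (a) gives $\lambda_p(W) \leq \lambda_p(W \setminus \overline{B}(y,R))$ for every $R > 0$, so only the reverse inequality in the limit $R \to 0$ is at stake. Given $\epsilon > 0$, I would use~\eqref{eq:w_min_max} to fix a $p$-dimensional subspace $S \subset H_0^1(W \cap \reg V)$ with $\max_{\phi \in S \setminus \{0\}} J_V(\phi) \leq \lambda_p(W) + \epsilon$, chosen as the span of the first $p$ eigenfunctions so that its elements are smooth and bounded on a neighbourhood of the regular point $y$. The main step, and the one genuine obstacle, is the construction of cut-offs $\eta_R \in C_c^1(\reg V)$ vanishing on $\overline{B}(y,R)$, equal to $1$ outside a slightly larger ball, and satisfying $\int \abs{\nabla \eta_R}^2 \dnv \to 0$ as $R \to 0$: a linear cut-off on the annulus of inner radius $R$ suffices when $n \geq 3$, with $\int\abs{\nabla \eta_R}^2\dnv$ of order $R^{n-2}$; in the borderline dimension $n = 2$ a logarithmic cut-off interpolating between $B(y,R)$ and $B(y,\sqrt{R})$ is required, with $\int\abs{\nabla \eta_R}^2\dnv$ of order $1/\log(1/R)$.

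With such $\eta_R$ in hand, the product rule together with $\norm{\eta_R}_{L^\infty} \leq 1$ and the boundedness of elements of $S$ near $y$ yield $B_V(\eta_R \phi,\eta_R \phi) \to B_V(\phi,\phi)$ and $\norm{\eta_R \phi}_{L^2(V)} \to \norm{\phi}_{L^2(V)}$ uniformly over $\phi$ on the compact $L^2(V)$-unit sphere of $S$. For all sufficiently small $R$ the map $\phi \mapsto \eta_R \phi$ is injective on the finite-dimensional space $S$ of smooth non-zero functions, hence $\eta_R(S) \subset H_0^1\bigl((W \setminus \overline{B}(y,R)) \cap \reg V\bigr)$ remains $p$-dimensional. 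Applying~\eqref{eq:w_min_max} on the punctured domain gives $\lambda_p(W \setminus \overline{B}(y,R)) \leq \lambda_p(W) + \epsilon + o(1)$, and sending $R \to 0$ followed by $\epsilon \to 0$ closes the gap.
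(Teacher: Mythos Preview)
Your proposal is correct and follows essentially the same approach as the paper: parts (a) and (b) are immediate from the min-max characterisation, and part (c) is proved by a cut-off argument exploiting that $\int \abs{\nabla \eta_R}^2 \dnv \to 0$ when $n \geq 2$ (the paper states this as the existence of a sequence $(\rho_m)$ with properties (1)--(2), citing the same logarithmic construction for $n=2$). The only cosmetic differences are that the paper takes a near-minimising subspace in $C_c^1(W \cap \reg V)$ rather than the span of eigenfunctions, and handles the limit by extracting a convergent subsequence of maximising coefficients $t_m \in \Sph^{p-1}$ rather than invoking uniform convergence over the compact unit sphere of $S$; both devices encode the same finite-dimensional compactness.
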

\begin{rem}
	The same properties hold for the spectrum and index of $L_i$, and the proof is easily
	modified to cover this case.
\end{rem}
\begin{proof}
(a) This is immediate from the min-max characterisations,
or simply by definition of the spectrum. Similarly for \ref{item:index_superadd}.

\ref{item:shrinking_balls} By monotonicity of the spectrum we have
\begin{equation}
\lambda_p(W \setminus \overline{B}(y,R))
\geq \lambda_p(W \setminus \overline{B}(y,R'))
\geq \lambda_p(W)
\end{equation}
for all $R > R' > 0$. The limit as $R \to 0$ therefore exists and is bounded below
by $\lambda_p(W)$; it remains only to show that $\lim_{R \to 0}
\lambda_p(W \setminus \overline{B}(y,R)) \leq \lambda_p(W)$.

By monotonicity of the spectrum it is equivalent to show that
for a fixed radius $R > 0$, $\lim_{m \to \infty}
\lambda_p(W \setminus \overline{B}(y,2^{-m}R))
\leq \lambda_p(W)$.
Let $(\rho_m)_{m \in \NN}$ be a sequence
in $C_c^1(B(y,R) \cap \reg V)$ with the following properties (such a sequence
exists provided $n \geq 2$, see Remark~\ref{rem:cutoff_functions} below):
\begin{enumerate}[label = (\arabic*)]
	\item \label{item:cutoff_props_a} $\srestr{\rho_m}{B(y,2^{-m}R) \cap \reg V}
	\equiv 0$ and 	$\rho_m \to 1$ $\calH^n$-a.e.\ in
	$W \setminus \{ y \} \cap \reg V $,
	
	\item \label{item:cutoff_props_b}
	$\norm{\nabla_V \rho_m}_{L^2(W \cap \reg V)} \to 0$.
\end{enumerate}
Let a small $\delta > 0$ be given and choose a family $(\phi_1,\dots,\phi_p)$
in $C_c^1(W \cap \reg V)$ whose $\linspan(\phi_1,\dots,\phi_p) =: S$ has
$\max_{S \setminus 0} J_V \leq \lambda_p(W) + \delta$.
Write $\rho_m S$ for 
$\linspan(\rho_m \phi_1,\dots,\rho_m \phi_p)\subset
C_c^1(W \setminus \overline{B}(y,2^{-m}R) \cap \reg V)$---for
$m$ large enough the functions $\rho_m \phi_i$ are indeed linearly independent.
By the weighted min-max formula \eqref{eq:w_min_max},
\begin{equation}
\max_{\rho_m S \setminus 0} J_V
\geq \lambda_p(W \setminus \overline{B}(y,2^{-m}R)).
\end{equation}

Let $t_m \in \Sph^{p-1} \subset \RR^p$ denote the coefficients of the linear combination
$t_m \cdot \rho_m \phi := \rho_m \sum_{j=1}^p t_{mj} \phi_j \in \rho_m S$
that realises $\max_{\rho_m S \setminus 0} J_V$.
Passing to a convergent subsequence $t_m \to t \in \Sph^{p-1} \subset \RR^p$
we get $J_V(t_m \cdot \rho_m \phi) \to J_V(t \cdot \phi)$, and hence
\begin{equation}
\lim_{m \to \infty} J_V(t_m \cdot \rho_m \phi)
\leq \max_{S \setminus 0} J_V.
\end{equation}
On the one hand $\max_{S \setminus 0} J_V \leq \lambda_p(W) + \delta$ by
our choice of $S$,
on the other hand $\lim_{m \to \infty} \lambda_p(W \setminus \overline{B}(y,2^{-m}R))
\leq \lim_{m \to \infty} J_V(t_m \cdot \rho_m \phi)$ by our choice of $t_m$.
The conclusion follows after combining these two observations and letting $\delta \to 0$.
\end{proof}

\begin{rem}
\label{rem:cutoff_functions}
A sequence of functions $(\rho_m)$ with properties
\ref{item:cutoff_props_a} and \ref{item:cutoff_props_b} exists provided $n \geq 2$,
as we assume throughout. When $n \geq 3$ one can use the standard cutoff functions;
for $n = 2$ a more precise construction is necessary, described
for instance in~\cite[Sec.~4.7]{Evans15}.
\end{rem}

\subsection{Spectrum of \texorpdfstring{$L_i$}{Li} and conditional proof of
	Theorem~\texorpdfstring{\ref{thm:main_thm_propagation_of_index_bounds}}{A}}
The main result in this section is Lemma~\ref{lem:theorem_holds_if_second_ff_bounded};
essentially it says that 
\begin{equation}
\lambda_p(W) \geq \limsup_{i \to \infty} \lambda_p^i(W)
\end{equation}
holds under the condition that $\sup_i \int_W \abs{A^i}^2 \dnvi < +\infty$.
What precedes it in this section are technical results required for its proof.

Again, by classical elliptic PDE theory the eigenvalues $\lambda_p^i(W)$
of $L_i = \Delta - \eps_i^{-2} W''(u_i)$ on $H_0^1(W)$
have the following min-max characterisation
in terms of the rescaled Allen--Cahn functional
$\calE_{\eps_i} = \eps_i^{-1} E_{\eps_i}$:
\begin{equation}
\label{eq:min_max_L_i}
\lambda_p^i(W) = \inf_{\dim S = p} \max_{\phi \in S \setminus \{0\}}
\frac{\delta^2 \calE_{\eps_i}(u_i)(\phi,\phi)}{\norm{\phi}_{L^2}^2}
\quad \text{for all $p \in \NN$},
\end{equation}
where the infimum is over $p$-dimensional linear subspaces
$S \subset H_0^1(W)$.
Define the \emph{Rayleigh quotient} $J_i$ by
\begin{equation}
J_i(\phi) = \frac{\delta^2 \calE_{\eps_i}(u_i)(\phi,\phi)}{\norm{\phi}_{L^2}^2}
\quad \text{for all $\phi \in H_0^1(W) \setminus \{0\}$},
\end{equation}
so that we can write the min-max characterisation more succinctly as
$\lambda_p^i = \inf_{\dim S = p} \max_{S \setminus \{0\}} J_i.$

To compare the spectrum of $L_V$ in $H_0^1$ with those of the
operators $L_i$ in $H_0^1(W)$,
extend functions in $C_c^1(W \cap \reg V)$ to $C_c^1(W)$
in the standard way, which we now describe to fix notations.
Pick a small enough $0 < \tau < \inj(M)$ so that $B_\tau V :=
\exp N_\tau V$ is a tubular neighbourhood of $W \cap \reg V$, where
$N_\tau V := \{ s_p \in NV \mid p \in W \cap \reg V, \abs{s_p} < \tau \}$.
We abuse notation slightly to denote points in $B_\tau V$
by $s_p$, and identify the fibre $N_pV$ with
$(\exp_p)_* N_p V \subset T_{s_p} (B_\tau V)$.
The distance function $d_V: x \in B_\tau V \mapsto \dist(x,\reg V)$
is Lipschitz and smooth on $B_\tau V \setminus \reg V$.
By the Gauss lemma $\grad d_V(s_p) = -s_p/\abs{s_p}$ for all
$s_p \in B_\tau V \setminus \reg V$.
A function $\phi \in C^1(B_\tau V)$ is constant
along geodesics normal to $\reg V$ if $\phi(s_p) =
\phi(0_p)$ for all $s_p \in B_\tau V$,
or equivalently if $\langle \nabla \phi,\nabla d_V  \rangle \equiv 0$
in $B_\tau V \setminus \reg V$.

\begin{lem}
	\label{lem:extension_scalar_functions}
	Any $\phi \in C_c^1(W \cap \reg V)$ can be extended to $C_c^1(W)$
	with $\langle \nabla \phi, \nabla d_V \rangle \equiv 0$
	in $B_{\frac{\tau}{2}} V \setminus \reg V$ for some $\tau = \tau(\phi) > 0$.
\end{lem}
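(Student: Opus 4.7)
The plan is to extend $\phi$ by making it constant along the normal geodesics to $\reg V$ over a neighbourhood of its support, and then truncate in the normal direction with a radial cut-off. The first step is to set $K = \supp \phi$, which is a compact subset of $W \cap \reg V$, and to choose $\tau = \tau(\phi) > 0$ small enough that two conditions hold simultaneously: (i) $\exp$ is a diffeomorphism from $\{s_p \in NV : p \in K', |s_p| < \tau\}$ onto its image, where $K'$ is a relatively compact neighbourhood of $K$ in $W \cap \reg V$; and (ii) this image is contained in $W$. Existence of such a $\tau$ is guaranteed by the tubular neighbourhood theorem applied to the embedded submanifold $W \cap \reg V$ together with compactness of $K$ and openness of $W$.

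Next I would define the nearest-point projection $\pi : B_\tau V \to \reg V$ by $\pi(s_p) = p$, which is smooth on $B_\tau V$ by construction, and set $\tilde\phi = \phi \circ \pi$ on $B_\tau V$ (using $\phi \equiv 0$ outside $K$ to define $\pi$-image values where needed). Pick a cut-off $\eta \in C_c^1([0,\tau))$ with $\eta \equiv 1$ on $[0,\tau/2]$ and $0 \leq \eta \leq 1$, and define
\begin{equation}
\Phi(x) = \begin{cases} \eta(d_V(x)) \tilde\phi(x) & \text{if } x \in B_\tau V, \\ 0 & \text{otherwise.}\end{cases}
\end{equation}
Then $\Phi$ extends by zero to a compactly supported $C^1$ function on $W$: smoothness inside $B_\tau V$ follows because $\pi$, $d_V$ and $\eta$ are all $C^1$ there, while the transition across $\{d_V = \tau\}$ is handled by $\eta$ vanishing identically near $\tau$. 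Compactness of the support follows from (ii), since $\supp \Phi \subset \exp(\{|s_p| \leq \tau, p \in K\}) \subset W$.

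Finally, the normal-derivative condition is immediate: on $B_{\tau/2} V$ we have $\eta(d_V) \equiv 1$, so $\Phi = \tilde\phi = \phi \circ \pi$ depends only on the footpoint $p$ and is constant along each normal geodesic. Equivalently, since $\grad d_V(s_p) = -s_p/|s_p|$ points along the normal fibre on $B_{\tau/2} V \setminus \reg V$ and $\Phi$ is constant along these fibres, $\langle \nabla \Phi, \nabla d_V \rangle \equiv 0$ there, as required.

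The only mildly delicate point is ensuring that $\Phi$ is genuinely $C^1$ across $\reg V$ itself (where $d_V$ is only Lipschitz): this is not a problem because on the whole of $B_{\tau/2} V$ one has $\Phi = \phi \circ \pi$, and $\pi$ is smooth up to and across $\reg V$, so no singularity of $d_V$ is seen by $\Phi$ on that region.
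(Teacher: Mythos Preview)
Your proposal is correct and follows essentially the same approach as the paper: extend $\phi$ to be constant along normal geodesics via the nearest-point projection, then multiply by a radial cut-off $\eta(d_V)$ that is identically $1$ on $B_{\tau/2}V$, with $\tau$ chosen small enough that the resulting support lies in $W$. Your write-up is in fact slightly more careful than the paper's, in that you explicitly address the $C^1$ regularity of the extension across $\reg V$ (where $d_V$ is only Lipschitz) by noting that on $B_{\tau/2}V$ the extension equals $\phi\circ\pi$ with $\pi$ smooth.
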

\begin{proof}
	Extend $\phi \in C_c^1(W \cap \reg V)$ to 
	$B_\tau V$ by setting $\tilde{\phi}(s_p) = \phi(p)$,  so that
	$\langle \nabla d_V , \nabla \tilde{\phi} \rangle \equiv 0$ in
	$B_\tau V \setminus \reg V$.
	Let $\eta \in C^1[0,\infty)$ be a cutoff
	function with $0 \leq \eta \leq 1$, $\eta \equiv 1 $ on $[0,1/2)$
	and $\spt \eta \subset [0,1)$.
	Then
	\begin{equation}
	(\eta \circ d_V/\tau) \tilde{\phi} \in C_c^1(B_\tau V)
	\text{ and }  (\eta \circ d_V/\tau) \tilde{\phi} = \tilde{\phi} \text{ on } B_{\tau/2} V.
	\end{equation}
	Moreover even though $B_\tau V \not\subset W$ in general, as $\spt \phi$
	is compactly contained in $W \cap \reg V$ we still have
	$ (\eta \circ d_V/\tau) \tilde{\phi} \in C_c^1(W)$ provided
	$0 < \tau < \dist (\spt \phi, \partial W)$.
\end{proof}
The following lemma gives an asymptotic lower bound for the Rayleigh
quotient $J_V$ in terms of the $J_i$.
\begin{lem}
	\label{lem:min_max_convergence_when_C_c_convergence}
	Let $B_{\tau}V$
	be a tubular neighbourhood of $W \cap \reg V$ with width $\tau > 0$,
	and let $(\phi_i)_{i \in \NN}$ be a sequence of functions in $C_c^1(W)$ with
	\begin{enumerate}[label = (\alph*), font = \upshape]
		\item $\langle \nabla \phi_i,\nabla d \rangle \equiv 0$ in $W \cap B_{\tau/2}V$
		for all $i$,
		\item $\phi_i \to \phi$ in $C_c^1(W)$ as $i \to \infty$,
		where $\phi \neq 0$ in $W \cap \reg V$.
		\label{item:Cc_cvg_second_assumption}
	\end{enumerate}
	If $\sup_i \int_W \abs{A^i}^2 \dnvi < +\infty$,
	then $J_V(\phi) \geq \limsup_{i \to \infty}
	J_i(\abs{\nabla u_i}\phi_i)$.
\end{lem}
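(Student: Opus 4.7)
The plan is to start from the identity of Lemma~\ref{lem:expr_second_variation} combined with the pointwise bound $\abs{\Aei}^2 \abs{\nabla u_i}^2 \leq \abs{\nabla^2 u_i}^2 - \abs{\nabla \abs{\nabla u_i}}^2$ of Lemma~\ref{lem:second_ff_ae_bound}, which yields as in~\eqref{eq:second_inequality_stability}
\begin{equation*}
\frac{\eps_i}{2 \sigma} \delta^2 \calE_{\eps_i}(u_i)(\abs{\nabla u_i}\phi_i, \abs{\nabla u_i}\phi_i) \leq \int_W \abs{\nabla \phi_i}^2 - (\abs{\Aei}^2 + \Ric_M(\nu^i, \nu^i))\phi_i^2 \dnvi.
\end{equation*}
Since $\int_W \phi_i^2 \dnvi = \frac{\eps_i}{2\sigma} \int_W \abs{\nabla u_i}^2 \phi_i^2 \dH^{n+1}$, dividing through yields an upper bound for $J_i(\abs{\nabla u_i}\phi_i)$ whose numerator and denominator are both expressed entirely with respect to the weight measures $\nvi$. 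It remains to pass to the limit $i \to \infty$ term by term.

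Several terms pass to the limit directly. The denominator tends to $\int_W \phi^2 \dnv$, which is strictly positive by hypothesis~\ref{item:Cc_cvg_second_assumption} since $W \cap \spt \nv = W \cap \reg V$ (using $W \subset\subset U \setminus \sing V$). For the Ricci term, $\Ric_M(\nu,\nu)$ is well-defined as a continuous function on $G_n(W)$ because the Ricci tensor is quadratic in $\nu$, so varifold convergence $\vei \rightharpoonup V$ together with the uniform convergence $\phi_i \to \phi$ yields convergence to $\int_W \Ric_M(\nu,\nu) \phi^2 \dnv$. For the gradient term, the uniform convergence $\abs{\nabla \phi_i}^2 \to \abs{\nabla \phi}^2$ on $W$ and the weak convergence $\nvi \rightharpoonup \nv$ give convergence to $\int_W \abs{\nabla \phi}^2 \dnv$, and the orthogonality $\langle \nabla \phi, \nabla d_V \rangle \equiv 0$ on $B_{\tau/2} V$ forces $\abs{\nabla \phi} = \abs{\nabla_V \phi}$ at every point of $\reg V$, which is where the measure $\nv$ is concentrated.

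The main obstacle is the curvature term $\int_W \abs{\Aei}^2 \phi_i^2 \dnvi$: since it is quadratic in $\Aei$, the weak convergence $\Aei \dvei \rightharpoonup A \dv$ from Proposition~\ref{prop:weak_convergence_second_ff} cannot be applied directly, and one needs a lower-semicontinuity argument. I would handle this via the elementary duality
\begin{equation*}
\abs{\Aei}^2 = \sup_{\xi} \bigl( 2 \Aei \cdot \xi - \abs{\xi}^2 \bigr),
\end{equation*}
where $\xi$ ranges over smooth compactly supported sections on $G_n(W)$. For each fixed $\xi$, the right-hand side is linear in both $\Aei$ and $\nvi$, so Proposition~\ref{prop:weak_convergence_second_ff} together with uniform convergence of $\phi_i^2$ gives
\begin{equation*}
\liminf_{i \to \infty} \int \abs{\Aei}^2 \phi_i^2 \dnvi \geq \int \bigl( 2 A \cdot \xi - \abs{\xi}^2 \bigr) \phi^2 \dnv,
\end{equation*}
and taking the supremum over $\xi$ recovers $\int \abs{A}^2 \phi^2 \dnv$. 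This is exactly the step at which the assumption $\sup_i \int_W \abs{\Aei}^2 \dnvi < \infty$ enters, by way of Proposition~\ref{prop:weak_convergence_second_ff}.

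Collecting the three estimates in the numerator via $\limsup(a_i - b_i - c_i) \leq \lim a_i - \liminf b_i - \lim c_i$, and dividing by the positive limit of the denominator, yields
\begin{equation*}
\limsup_{i \to \infty} J_i(\abs{\nabla u_i}\phi_i) \leq \frac{\int \abs{\nabla_V \phi}^2 - (\abs{A}^2 + \Ric_M(\nu,\nu))\phi^2 \dnv}{\int \phi^2 \dnv} = J_V(\phi),
\end{equation*}
which is the desired conclusion. The bookkeeping above is entirely routine; the only genuinely delicate ingredient is the lower semicontinuity of the $\abs{A}^2$-term.
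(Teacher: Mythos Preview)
Your proof is correct and follows the same overall strategy as the paper: both start from inequality~\eqref{eq:second_inequality_stability}, divide by $\int \phi_i^2 \dnvi$, and then pass to the limit in the four terms (denominator, gradient, Ricci, curvature) exactly as you describe.

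The only noticeable difference is in how the curvature term is handled. The paper first shows that $A^i \phi_i \dnvi \rightharpoonup A \phi \dnv$ weakly (via a splitting argument using the uniform $L^2$--bound on $A^i$ and Cauchy--Schwarz), and then invokes the packaged lower-semicontinuity inequality~\eqref{eq:measure_function_convergence_L2_bound} of Lemma~\ref{lem:measure_function_convergence} applied to the pair $(A^i \phi_i, \nvi)$. You instead unpack this step by hand via the pointwise duality $\abs{\Aei}^2 = \sup_\xi (2\Aei\cdot\xi - \abs{\xi}^2)$, fix $\xi$, pass to the limit using Proposition~\ref{prop:weak_convergence_second_ff}, and then take the supremum over $\xi$. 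Both routes are standard and essentially equivalent; the paper's is marginally shorter because the Fatou-type inequality is already available from Appendix~\ref{app:measure_function_convergence}, while yours is more self-contained. The only point you should make explicit is that the supremum over smooth test sections $\xi$ does recover $\int \abs{A}^2 \phi^2 \dnv$, which follows from density of $C_c$ in $L^2(\phi^2 \dnv)$ (or simply because $A$ is smooth on $\reg V$ and $\spt(\phi^2\dnv) \subset W \cap \reg V$).
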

\begin{proof}
Before we start the proof proper, note that for all $\phi \in H_0^1(U)$,
dividing both sides of \eqref{eq:second_inequality_stability}
by $ \frac{\eps_i}{2 \sigma} \int \phi^2 \abs{\nabla u_i}^2 \dH^{n+1}
= \int \phi^2 \dnvi$ yields
\begin{equation}
\label{eq:sec_var_sec_form}
\norm{\phi}_{L^2(\vi)}^{-2}
\int \abs{\nabla \phi}^2 - (\abs{A^i}^2 + \Ric_M(\nui,\nui)) \phi^2 \dnvi
\geq J_i(\abs{\nabla u_i}\phi),
\end{equation}
provided of course that $\norm{\phi}_{L^2(V^i)}^2 \neq 0$.

We treat the terms on the left-hand side separately in the calculations
\eqref{item:proof_thmA_i_1}--\eqref{item:proof_thmA_i_4} below.
Once these are completed, we combine \eqref{item:proof_thmA_i_1} with our
assumption that $\norm{\phi}_{L^2(V)}^2 \neq 0$ to obtain that
$\norm{\phi_i}_{L^2(V^i)} \neq 0$ for large enough $i$.
The conclusion follows by combining \eqref{eq:sec_var_sec_form}
with the remaining calculations: 
%
\begin{enumerate}
	\item $\int \phi^2 \dnv = \lim_{i \to \infty} \int \phi_i^2
	\dnvi$\label{item:proof_thmA_i_1}
	\item $\int \abs{\nabla_V \phi}^2 \dnv = \lim_{i \to \infty} \int
	\abs{\nabla \phi_i}^2 \dnvi$, \label{item:proof_thmA_i_2}
	\item $\int \abs{A}^2 \phi^2 \dnv
	\leq \liminf_{i \to \infty} \int \abs{A^i}^2 \phi_i^2 \dnvi$,
	\label{item:proof_thmA_i_3}
	\item $\int \Ric_M(\nu,\nu) \phi^2 \dnv
	= \lim_{i \to \infty } \int \Ric_M(\nui,\nui)\phi_i^2 \dnvi$.
	\label{item:proof_thmA_i_4}
\end{enumerate}

\eqref{item:proof_thmA_i_1}
By assumption $\phi_i^2 \to \phi^2$
in $C_c(W)$, whence by Corollary~\ref{cor:meas_fct_cvg} we get
$\int \phi^2 \dnv = \lim_{i \to \infty} \int \phi_i^2 \dnvi$.
The same argument proves \eqref{item:proof_thmA_i_2}, after noticing
that $\langle \nabla \phi_i, \nabla d_V \rangle \equiv 0$ implies
$\abs{\nabla \phi}^2 = \abs{\nabla_V \phi}^2$ on $W \cap \reg V$.

\eqref{item:proof_thmA_i_3}
The sequence $(A^i \phi_i \dnvi)$ converges weakly to $A \phi \dnv$,
as we can show by
testing against an arbitrary $\varphi \in C_c(U)$:
\begin{multline}
\int A^i \phi_i \varphi \dnvi 
- \int A \phi \varphi \dnv
= \\ \int A^i (\phi_i - \phi) \varphi \dnvi
+ \int A^i \phi \varphi \dnvi - \int A \phi \varphi \dnv.
\end{multline}
The first integral is bounded by
\begin{equation}
\Big \lvert \int A^i (\phi_i - \phi) \varphi \dnvi \Big \rvert
\leq 
\norm{\phi_i - \phi}_{L^\infty} \norm{\varphi}_{L^2(\vi)}
\norm{A^i}_{L^2(V^i)} \to 0
\end{equation}
because $\phi_i \to \phi$
in $C_c(W)$ as $i \to \infty$.
The remaining terms tend to $0$ by the weak convergence of
$A^i \dnvi \rightharpoonup A \dnv$ tested against
$\phi \varphi \in C_c(W)$.
Then inequality \eqref{eq:measure_function_convergence_L2_bound}
gives
$
\int \abs{A}^2 \phi^2 \dnv
\leq \liminf_{i \to \infty} \int \abs{A^i}^2 \phi_i^2 \dnvi.
$

\eqref{item:proof_thmA_i_4}
For each $S \in G_n(T_p M)$ pick a unit vector $\nu_S$ in $T_p M$
orthogonal to $S$, and define a smooth function $R_M$ on $G_n(U)$ by
$R_M \! : S \mapsto \Ric_M(\nu_S,\nu_S)$.
Then $\phi_i^2 R_M \to \phi^2 R_M$ in $C_c(G_n(U))$ as $i \to \infty$,
and by Corollary~\ref{cor:meas_fct_cvg},
\begin{multline}
\int  \phi_i^2 \Ric_M(\nui,\nui) \dnvi
= \\  \int \phi_i^2 R_M \dvi 
\to 
\int \phi^2 R_M \dv = \int \phi^2 \Ric_M(\nu,\nu) \dnv.
\qedhere
\end{multline}
\end{proof}

We conclude the section with a proof of
Theorem~\ref{thm:main_thm_propagation_of_index_bounds}\ref{item:main_thm_spec}
in the case where there is a uniform $L^2(\vei)$--bound on the second
fundamental forms $(\Aei)$.
\begin{lem}
	\label{lem:theorem_holds_if_second_ff_bounded}
	Let $W \subset \subset U \setminus \sing V$ be open with
	$W \cap \reg V \neq \emptyset$.
	If $\sup_i \int_W \abs{\Aei}^2 \dnvi < +\infty$,
	then $\lambda_p(W) \geq \limsup_{i \to \infty} \lambda_p^i(W)$
	for all $p$.
\end{lem}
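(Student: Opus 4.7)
The plan is to feed the given $L^{2}$-curvature bound into the previous lemma and then combine the resulting pointwise bound of Rayleigh quotients with the min-max characterisations~\eqref{eq:w_min_max} and~\eqref{eq:min_max_L_i}. Fix $\delta>0$. By the density of $C_{c}^{1}(W\cap\reg V)$ in $H_{0}^{1}$ and the min-max formula~\eqref{eq:w_min_max}, choose linearly independent $\phi_{1},\dots,\phi_{p}\in C_{c}^{1}(W\cap\reg V)$ whose span $S$ satisfies $\max_{S\setminus\{0\}}J_{V}\leq \lambda_{p}(W)+\delta$. Using Lemma~\ref{lem:extension_scalar_functions}, extend each $\phi_{j}$ to $\tilde{\phi}_{j}\in C_{c}^{1}(W)$, constant along geodesics normal to $\reg V$ inside a common tubular neighbourhood $B_{\tau/2}V$.

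Next I would produce, for each large $i$, a genuine $p$-dimensional competitor for $\lambda_{p}^{i}(W)$. Set $S_{i}=\{|\nabla u_{i}|\,(t\cdot\tilde{\phi}):t\in\RR^{p}\}\subset H_{0}^{1}(W)$, where $t\cdot\tilde{\phi}=\sum_{j}t_{j}\tilde{\phi}_{j}$. To see that $\dim S_{i}=p$ eventually, note that the Gram matrix of the $|\nabla u_{i}|\tilde{\phi}_{j}$ in $L^{2}(W,\,\calH^{n+1})$ equals $(2\sigma/\eps_{i})\int_{W}\tilde{\phi}_{j}\tilde{\phi}_{k}\,d\|V^{i}\|$; by measure-function convergence (Corollary~\ref{cor:meas_fct_cvg}) this matrix converges, up to the scalar $2\sigma/\eps_{i}$, to $\bigl(\int \phi_{j}\phi_{k}\,d\|V\|\bigr)_{jk}$, which is positive definite because the $\phi_{j}$ are linearly independent in $L^{2}(V)$ (the multiplicity of $V$ is a positive integer on $\reg V\cap\spt\|V\|$). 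The min-max for $L_{i}$ then gives
\begin{equation}
\lambda_{p}^{i}(W)\;\leq\;\max_{t\in\Sph^{p-1}}J_{i}\bigl(|\nabla u_{i}|\,(t\cdot\tilde{\phi})\bigr),
\end{equation}
and by compactness of $\Sph^{p-1}$ the maximum is attained at some $t_{i}$; pass to a subsequence with $t_{i}\to t^{\ast}\in\Sph^{p-1}$.

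Then $t_{i}\cdot\tilde{\phi}\to t^{\ast}\cdot\tilde{\phi}$ in $C_{c}^{1}(W)$, each element is normal-geodesic invariant in $B_{\tau/2}V$, and the limit restricts to the non-vanishing element $t^{\ast}\cdot\phi\in S\setminus\{0\}$ on $W\cap\reg V$. The hypothesis $\sup_{i}\int_{W}|A^{i}|^{2}\,d\|V^{i}\|<+\infty$ is precisely what is needed to apply Lemma~\ref{lem:min_max_convergence_when_C_c_convergence} to the sequence $\phi_{i}:=t_{i}\cdot\tilde{\phi}$, yielding
\begin{equation}
J_{V}(t^{\ast}\cdot\phi)\;\geq\;\limsup_{i\to\infty}J_{i}\bigl(|\nabla u_{i}|\,t_{i}\cdot\tilde{\phi}\bigr)\;\geq\;\limsup_{i\to\infty}\lambda_{p}^{i}(W).
\end{equation}
Combining with $J_{V}(t^{\ast}\cdot\phi)\leq\max_{S\setminus\{0\}}J_{V}\leq \lambda_{p}(W)+\delta$ and letting $\delta\to 0$ finishes the proof.

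The only genuine subtlety is the dimension count for $S_{i}$: a priori, multiplying by $|\nabla u_{i}|$ could collapse the span, since the critical set of $u_{i}$ is uncontrolled. The Gram-matrix argument above is the cleanest way I see to sidestep this, and it is the single step that actually uses the weak convergence $V^{i}\rightharpoonup V$ together with integrality; everything else is a mechanical assembly of the previously established pieces.
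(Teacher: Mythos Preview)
Your proof is correct and follows essentially the same route as the paper's. The only cosmetic difference is in the linear-independence step: the paper argues by contradiction (assume a dependence with unit coefficients $a_{i'}$, pass to a limit $a\in\Sph^{p-1}$, and use $\|a_{i'}\cdot\phi\|_{L^2(V^{i'})}\to\|a\cdot\phi\|_{L^2(V)}>0$), whereas your Gram-matrix formulation packages the same convergence more directly.
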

\begin{proof}
We may assume that every connected component of $W$ intersects $\spt \nv$
(or $\reg V$, equivalently as $W \cap \sing V = \emptyset$)
without restricting generality: if $C$ is a
connected component of $W$ with $C \cap \reg V = \emptyset$,
then $\lambda_p(W \setminus C) = \lambda_p(W)$, and by monotonicity
$\lambda_p^i(W \setminus C) \geq \lambda_p^i(W)$ for all $i$.

Given $\delta > 0$ there is a $p$-dimensional linear subspace
$S = \linspan(\phi_1,\dots,\phi_p)$ of $C_c^1(W \cap \reg V)$ with
\begin{equation}
\label{eq:almost_maximiser_but_in_C_c}
\lambda_p(W) + \delta
\geq
\max_{S \setminus \{0\}} J_V.
\end{equation}
Extend the functions $\phi_i$
to $C_c^1(W)$ as in Lemma~\ref{lem:extension_scalar_functions};
for large enough $i$ the family
$(\abs{\nabla u_i}\phi_1,\dots,\abs{\nabla u_i} \phi_p)$
is still linearly independent.
Indeed, otherwise we could extract a subsequence
such that $(\abs{\nabla u_{i'}} \phi_1,\dots,
\abs{\nabla u_{i'}} \phi_p)$ has a linear dependence, with coefficients
$a_{i'} \neq 0 \in \RR^p$ say.
Then notice that 
\begin{equation}
\abs{\nabla u_{i'}} a_{i'} \cdot  \phi = 0
\Leftrightarrow \norm{a_{i'} \cdot \phi}_{L^2(V^{i'})} = 0,
\end{equation}
where we abbreviated
$a_{i'} \cdot \phi := \sum_{j=1}^p a_{i'j} \abs{\nabla u_{i'}} \phi_j$.
We may normalise the coefficients $a_{i'}$ so as to guarantee
$\abs{a_{i'}} = 1$ and then, possibly after extracting a second subsequence, assume that $a_{i'} \to a \in \Sph^{p-1}$ as $i' \to \infty$.
The resulting strong convergence
$a_{i'} \cdot \phi \to a \cdot \phi$ in $C_c(W)$
combined with $\nvi \rightharpoonup \nv$ yield
$\norm{a_{i'} \cdot \phi}_{L^2(V^{i'})} \to \norm{a \cdot \phi}_{L^2(V)}$;
this contradicts $\norm{a \cdot \phi}_{L^2(V)} > 0$.

From now on take $i$ large enough so that
$(\abs{\nabla u_i} \phi_1,\cdots,\abs{\nabla u_i} \phi_p)$
is linearly independent.
For such large $i$, we may let $t_i \in \Sph^{p-1} \subset \RR^p$
be the (normalised) coefficients of a linear combination
$t_i \cdot \abs{\nabla u_i} \phi
= \sum_{j=1}^p t_{ij} \abs{\nabla u_i} \phi_j$
that maximises the Rayleigh quotient $J_i$:
\begin{equation}
J_i(t_i \cdot \abs{\nabla u_i} \phi)
= \max_{\abs{\nabla u_i} S \setminus \{0\}} J_i \geq \lambda_p^i(W).
\end{equation}
Extract a convergent subsequence $t_{i'} \to t \in \Sph^{p-1}$,
so that $t_{i'} \cdot \phi \to t \cdot \phi$ in $C_c^1(W)$ as $i' \to \infty$.
Lemma~\ref{lem:min_max_convergence_when_C_c_convergence} gives
$J_V(t \cdot \phi) \geq \limsup_{i \to \infty}
\max_{\abs{\nabla u_i} S\setminus \{ 0\}} J_i,$
which in turn is greater than $\limsup_{i \to \infty} \lambda_p^{i}(W)$.
Using \eqref{eq:almost_maximiser_but_in_C_c},
\begin{equation}
\lambda_p(W) + \delta
\geq J_V(t \cdot \phi)
\geq \limsup_{i \to \infty} \lambda_p^i(W),
\end{equation}
and we conclude by letting $\delta \to 0$.
\end{proof}

Lemma~\ref{lem:theorem_holds_if_second_ff_bounded}
has the following immediate corollary.
\begin{cor}
	\label{cor:thmA_ii_when_second_ff_is_bounded}
	Under the hypotheses of Lemma~\ref{lem:theorem_holds_if_second_ff_bounded},
	\begin{equation}
	\ind_W B_V \leq \liminf_{i \to \infty} (\ind_W u_i).
	\end{equation}
\end{cor}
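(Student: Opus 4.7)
The plan is to deduce this immediately from the spectral inequality in Lemma~\ref{lem:theorem_holds_if_second_ff_bounded}, since both sides of the desired inequality are defined purely in terms of counting negative eigenvalues: by definition $\ind_W B_V = \card\{p \in \NN \mid \lambda_p(W) < 0\}$ and $\ind_W u_i = \card \{p \in \NN \mid \lambda_p^i(W) < 0\}$. So once the spectrum of $L_V$ is controlled termwise from below by that of $L_i$, the index bound should drop out with only a little bookkeeping about the $\liminf$.

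Set $m := \liminf_{i \to \infty} \ind_W u_i$. The case $m = +\infty$ is trivial, so I would assume $m < \infty$. Since the indices take values in $\NN$, I can extract a subsequence $(u_{i_k})$ with $\ind_W u_{i_k} = m$ for every $k$, which by the definition of the index means precisely that $\lambda_{m+1}^{i_k}(W) \geq 0$ for every $k$. Applying Lemma~\ref{lem:theorem_holds_if_second_ff_bounded} with $p = m + 1$ then yields
\begin{equation}
\lambda_{m+1}(W) \;\geq\; \limsup_{i \to \infty} \lambda_{m+1}^i(W)
\;\geq\; \limsup_{k \to \infty} \lambda_{m+1}^{i_k}(W)
\;\geq\; 0.
\end{equation}
Consequently at most $m$ eigenvalues of $L_V$ in $W$ are strictly negative, so $\ind_W B_V \leq m$.

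I do not anticipate any substantive obstacle: the uniform $L^2$--bound on the second fundamental forms and the heavy PDE-theoretic input are already consumed in establishing Lemma~\ref{lem:theorem_holds_if_second_ff_bounded}, and what remains is essentially bookkeeping. The only subtlety worth flagging is the need to extract the minimizing subsequence for the $\liminf$ before invoking the spectral bound; applying the lemma directly to the full sequence would give only $\lambda_{m+1}(W) \geq \limsup_i \lambda_{m+1}^i(W)$, and without the subsequence one has no direct control on the sign of this right-hand side.
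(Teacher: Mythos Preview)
Your proposal is correct and matches the paper's approach: the paper simply declares this an ``immediate corollary'' of Lemma~\ref{lem:theorem_holds_if_second_ff_bounded} without spelling out the details, and your argument---extracting a subsequence realising the $\liminf$ of the indices and then applying the spectral bound with $p = m+1$---is exactly the bookkeeping the paper has in mind. Your chain of inequalities is sound (in particular $\limsup_i \lambda_{m+1}^i(W) \geq \limsup_k \lambda_{m+1}^{i_k}(W)$ holds for any subsequence), and one could equally well apply the lemma directly to the subsequence $(u_{i_k})$, since the hypotheses of Lemma~\ref{lem:theorem_holds_if_second_ff_bounded} are inherited.
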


\section{Proof of the main theorem (Theorem A)}
\label{sec:proof_main_thm}
We briefly recall the context of the proof: $M^{n+1}$ is a closed Riemannian
manifold and $U \subset M$ is an arbitrary open subset. The sequence
of functions $(u_i)$ in $C^3(U)$ satisfies Hypotheses \ref{hypA}, \ref{hypB}
and \ref{hyp:C_index_bound}---the last hypothesis says that
$\ind u_i \leq k$ for all $i$. 
To every $u_i$ we associate the $n$--varifold $\vei$ from
\eqref{eq:definition_varifolds}. By Theorem~\ref{lem:properties_V},
we may pass to a subsequence of $(\vei)$ converging
weakly to a stationary integral varifold $V$. 
\subsection{Spectrum and index of \texorpdfstring{$V$}{V}: proof of (i) and (ii)}
The main idea, inspired by an argument of Bellettini--Wickramasekera
\cite{BelWic16}, is to fix a compactly contained open subset
 $W \subset \subset U \setminus \sing V$ and study
the stability of $u_i$ in open balls covering $W \cap \reg V \neq \emptyset$.
We then shrink the radii of the covering balls to $0$, and prove
the spectral lower bound of
Theorem~\ref{thm:main_thm_propagation_of_index_bounds}\ref{item:main_thm_spec} by 
induction on $k$. The upper bound on $\ind B_V$ of
Theorem~\ref{thm:main_thm_propagation_of_index_bounds}\ref{item:main_thm_ind_reg}
is then an immediate consequence.

In the base of induction the $u_i$ are stable in $U$.
Let $\eta \in C_c^1(U)$ be a cutoff function constant equal to $1$ on $W$.
The stability inequality \eqref{eq:second_inequality_stability} gives that
\begin{equation}
\int_W \abs{\Aei}^2 \dnvi
\leq C(M) \dist(W,\partial U)^{-2}
\norm{\vei}(U) \quad \text{for all $i$}.
\end{equation}
Combining this with \eqref{eq:density_weight_measure}
we get $\sup_i \int_W \abs{\Aei}^2 \dnvi < +\infty$,
and thus $\lambda_p(W) \geq \limsup_{i \to \infty} \lambda_p^i (W)$
by Lemma~\ref{lem:theorem_holds_if_second_ff_bounded}.

For the induction step, let $k \geq 1$ and assume that
Theorem~\ref{thm:main_thm_propagation_of_index_bounds}\ref{item:main_thm_spec}
holds with $k-1$ in place of $k$. Consider an arbitrary $W \subset \subset
U \setminus \sing V$ that intersects $\reg V$. 
Fix a radius $0 < r < \dist(W,\sing V)$, and pick points
$x_1,\dots,x_N \in W \cap \reg V$ such that
$W \cap \reg V \subset \cup_{j = 1}^N B(x_j,\frac{r}{2})$.
We define the following \emph{Stability Condition} for the cover
$\{B(x_j,\frac{r}{2})\}_{1 \leq j \leq N}$:
\begin{enumerate}[label={(SC)}]
	\item \emph{For large $i$, each $u_i$ is stable in every ball
		$B(x_1,r),\dots,B(x_N,r)$. \label{item:stability_condition}}
\end{enumerate}

\begin{claim}
\label{claim:SH_regularity}
If for the cover $\{B(x_j,\frac{r}{2})\}_{1 \leq j \leq N}$:
\begin{enumerate}[label = (\alph*)]
\item \label{item:reg_SH_holds}
\ref{item:stability_condition} holds,
then $\lambda_p(W) \geq \limsup_{i \to \infty} \lambda_p^i(W)$;

\item \label{item:reg_SH_fails}
\ref{item:stability_condition} fails, then $\lambda_p(W \setminus \overline{B_r})
\geq \limsup_{i \to \infty} \lambda_p^i(W \setminus \overline{B_r})$
for some ball $B_r\in \{ B(x_j,r)\}$.
\end{enumerate}
\end{claim}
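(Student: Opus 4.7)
The claim splits naturally into two independent arguments according to whether the Stability Condition (SC) holds on the finite cover.

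\textbf{Part (a).} The plan is to produce a uniform $L^2$-bound on the second fundamental forms $A^i$ over $W$ and then invoke Lemma~\ref{lem:theorem_holds_if_second_ff_bounded}. Assuming (SC), for all sufficiently large $i$ each $u_i$ is stable in every ball $B(x_j, r)$, so Corollary~\ref{cor:L2_bound_for_Ae_and_Be_if_u_stable} combined with the uniform weight bound \eqref{eq:density_weight_measure} gives
\begin{equation*}
\int_{B(x_j, r/2)} \abs{A^i}^2 \dnvi
\leq \frac{C(M)}{r^2} \norm{V^i}(B(x_j, r))
\leq \frac{C(M) E_0}{2 \sigma r^2}.
\end{equation*}
Summing over the finite cover $\{ B(x_j, r/2) \}_{1 \leq j \leq N}$ of $W \cap \reg V$ yields $\sup_i \int_W \abs{A^i}^2 \dnvi < \infty$, at which point Lemma~\ref{lem:theorem_holds_if_second_ff_bounded} immediately delivers the spectral bound.

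\textbf{Part (b).} Here the idea is to exploit additivity of the Morse index over disjoint open subsets to reduce to the inductive hypothesis (that is, Theorem~\ref{thm:main_thm_propagation_of_index_bounds} with $k - 1$ in place of $k$). If (SC) fails, there is some ball $B_r \in \{ B(x_j, r) \}$ and a subsequence, which I do not relabel, along which $\ind_{B_r} u_i \geq 1$. By the analogue for $L_i$ of Lemma~\ref{lem:shrinking_balls_spectrum}\ref{item:index_superadd}, applied to the disjoint open subsets $B_r$ and $U' := U \setminus \overline{B_r}$, one has
\begin{equation*}
\ind_{B_r} u_i + \ind_{U'} u_i = \ind_{B_r \cup U'} u_i \leq \ind_U u_i \leq k,
\end{equation*}
so that $\ind_{U'} u_i \leq k - 1$. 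Considered on $U'$, the sequence $(u_i)$ still satisfies Hypotheses~\ref{hypA} and~\ref{hypB}, the associated varifolds $\vei|_{U'}$ still converge weakly to $V|_{U'}$ (restriction preserves weak convergence on open sets), and the displayed inequality provides Hypothesis~\ref{hyp:C_index_bound} with the improved bound $k - 1$. The set $W' := W \setminus \overline{B_r}$ is compactly contained in $U' \setminus \sing(V|_{U'}) = (U \setminus \sing V) \setminus \overline{B_r}$, so the inductive hypothesis applies and yields $\lambda_p(W') \geq \limsup_{i \to \infty} \lambda_p^i(W')$, as desired.

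\textbf{Where care is needed.} There is no genuine obstacle; the only point requiring attention is the bookkeeping for subsequences in part (b). Once the subsequence along which $\ind_{B_r} u_i \geq 1$ has been extracted, every subsequent statement---including the weak convergence $\vei \rightharpoonup V$ and the $\limsup$ in the conclusion---refers to that subsequence, and the analogous conclusion of the outer theorem is understood in the same sense. Everything else is a direct combination of results already established in the preliminary section.
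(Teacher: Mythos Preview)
Your argument for part~(b) is correct and essentially the same as the paper's, modulo the cosmetic choice of applying the induction hypothesis on $U \setminus \overline{B_r}$ rather than on $W \setminus \overline{B_r}$.

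In part~(a), however, there is a small but genuine gap. The balls $B(x_j,r/2)$ cover $W \cap \reg V$, not $W$ itself, so summing the estimates from Corollary~\ref{cor:L2_bound_for_Ae_and_Be_if_u_stable} only bounds $\int_{\cup_j B(x_j,r/2)} \abs{A^i}^2 \dnvi$. The measures $\nvi$ are not supported on $\spt\nv$; they may carry mass on $W \setminus \cup_j B(x_j,r/2)$, and you have no control on $\abs{A^i}^2$ there. Hence the conclusion $\sup_i \int_W \abs{A^i}^2 \dnvi < \infty$ is unjustified, and Lemma~\ref{lem:theorem_holds_if_second_ff_bounded} cannot be invoked directly on $W$.

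The fix, which is what the paper does, is to set $W_r := W \cap \bigcup_{j} B(x_j,r/2)$. One then has the curvature bound on $W_r$, so Lemma~\ref{lem:theorem_holds_if_second_ff_bounded} gives $\lambda_p(W_r) \geq \limsup_i \lambda_p^i(W_r)$. Now observe that $W_r \cap \reg V = W \cap \reg V$, whence $\lambda_p(W_r) = \lambda_p(W)$; and $W_r \subset W$, whence $\lambda_p^i(W_r) \geq \lambda_p^i(W)$ by monotonicity. Combining these gives the desired inequality on $W$.
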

\begin{proof}
\ref{item:reg_SH_holds}
Let $W_r = W \cap \cup_{j=1}^N B(x_j,\frac{r}{2})$, so that 
$W_r \cap \reg V = W \cap \reg V$ and hence $\lambda_p(W_r) = \lambda_p(W)$.
Moreover $W_r \subset W$, so $\lambda_p^i(W_r) \geq \lambda_p^i(W)$
for all $i$ by monotonicity of the spectrum.
Therefore it is enough to show that $\lambda_p(W_r)
\geq \limsup_{i \to \infty} \lambda_p^i(W_r)$.

Because \ref{item:stability_condition} holds, summing
\eqref{eq:bound_in_cor_stability_in_balls} over all balls we get
\begin{equation}\label{bound}
\int_{W_r} \abs{\Aei}^2 \dnvi \leq
\frac{NC}{r^2} \norm{\vei}(W_r) \leq \frac{NCE_{0}}{2 r^2 \sigma}
\quad \text{for all $i$},
\end{equation}
so $\lambda_p(W_r) \geq \limsup_{i \to \infty} \lambda_p^i(W_r)$
by Lemma~\ref{lem:theorem_holds_if_second_ff_bounded}.

\ref{item:reg_SH_fails} If \ref{item:stability_condition} fails, then
some subsequence $(u_{i'})$ must be unstable in one of the balls
$B_r \in \{B(x_j,r)\}$,
in other words $\ind_{B_r} u_{i'} \geq 1$ for all $i'$.
On the other hand
\begin{equation}
\ind_{B_r} u_{i'} + \ind_{W \setminus \overline{B_r}} u_{i'}
\leq \ind_W u_{i'}
\end{equation}
because $B_r$ and $W \setminus \overline{B_r}$ are disjoint
open sets.
As $\ind_W u_{i'} \leq k$, we get 
$\ind_{W \setminus \overline{B_r}} u_{i'} \leq k-1$
for all $i'$, and we conclude after applying the
induction hypothesis to $(u_{i'})$ in $W \setminus \overline{B_r}$.\end{proof}
\begin{rem}
This argument shows that when \ref{item:stability_condition} fails there is
a ball $B_r\in \{B(x_j,r)\}$ with $\lambda_p(W \setminus \overline{B_r})
\geq \limsup_{i \to \infty} \lambda_{p+1}^i(W)$ for $p \geq k$, and thus also
$\ind_{W \setminus \overline{B_r}} B_V \leq k-1$, but
the induction step only requires the weaker conclusion
from Claim~\ref{claim:SH_regularity}.
\end{rem}

Now consider a decreasing sequence $r_m \to 0$ with
$0 < r_m < \dist(W,\sing V)$ and reason as above with $r = r_m$.
For each $m$, pick points $x_1^m,\dots,x_{N_{m}}^m \in W \cap \reg V$
such that
\begin{equation}
W \cap \reg V \subset \cup_{j=1}^{N_m} B \big(x_j^m,\frac{r_m}{2} \big).
\end{equation}
If \ref{item:stability_condition} holds for a cover
$\{B(x_j^m,\frac{r_m}{2})\}$, then
$\lambda_p(W) \geq \limsup_{i \to \infty} \lambda_p^i(W)$
by Claim~\ref{claim:SH_regularity}, and the induction step
is completed.

Otherwise \ref{item:stability_condition} fails for all constructed covers,
and by Claim~\ref{claim:SH_regularity} there is a sequence $(y_m)$ in
$W \cap \reg V$ with
\begin{equation}
\lambda_p (W \setminus \overline{B}(y_m,r_m))
\geq \limsup_{i \to \infty} \lambda_p^i(W \setminus \overline{B}(y_m,r_m)),
\end{equation}
and thus by monotonicity of the spectrum
\begin{equation}
\label{eq:spec_lower_bound_outside_ball}
\lambda_p (W \setminus \overline{B}(y_m,r_m)) \geq
\limsup_{i \to \infty} \lambda_p^i(W).
\end{equation}
Passing to a subsequence if necessary, we may assume that
$(y_m)$ converges to a point $y \in \overline{W} \cap \reg V$.
If we fix a radius $R > 0$, then $B(y_m,r_m) \subset B(y,R)$ for large enough $m$, 
so by monotonicity and \eqref{eq:spec_lower_bound_outside_ball},
\begin{equation}
\lambda_p(W \setminus \overline{B}(y,R))
\geq \limsup_{m \to \infty} \lambda_p(W \setminus \overline{B}(y_m,r_m)) 
\geq \limsup_{i \to \infty} \lambda_p^i(W).
\end{equation}
The conclusion follows after combining this with
$\lambda_p(W) = \lim_{R \to 0} \lambda_p(W \setminus \overline{B}(y,R))$
from Lemma~\ref{lem:shrinking_balls_spectrum}.

Together with the base of induction, we have proved
Theorem~\ref{thm:main_thm_propagation_of_index_bounds}\ref{item:main_thm_spec}
for all sequences $(u_i)$ with $\sup_i \ind u_i \leq k$ for some $k \in \NN$.
The index bound $\ind_W B_V \leq k$ follows immediately:
the spectral lower bound implies that $L_V$ must have fewer
negative eigenvalues than the $L_i$ as $i \to \infty$. Therefore
\begin{equation}
\ind_W B_V = \mathrm{card} \{p \in \NN \mid \lambda_p(W)  < 0 \} \leq k.
\end{equation}
As the subset $W$ was arbitrary we get $\ind B_V \leq k$;
this proves
Theorem~\ref{thm:main_thm_propagation_of_index_bounds}\ref{item:main_thm_ind_reg}.

\subsection{Regularity of \texorpdfstring{$V$}{V}: proof of \texorpdfstring{$\dimh \sing V \leq n-7$}{dimh sing V <= n-7}}
\label{subsection:proof_regularity_of_v}
The approach is the same as in the proof of
Theorem~\ref{thm:main_thm_propagation_of_index_bounds}\ref{item:main_thm_spec}--\ref{item:main_thm_ind_reg}
with one difference: we again proceed by induction on $k$, but
we now cover the entire support $\spt \nv$ (including the singular set),
instead of constructing covers a positive distance away from $\sing V$.

The base of induction, where the $u_i$ are stable in $U$,
was proved in~\cite{TonegawaWickramasekera10}.

For the induction step, suppose that $\dimh (U' \cap \sing V) \leq n-7$ 
holds with $k-1$ in place of $k$, and for arbitrary open subsets $U' \subset U$.
Fix $r > 0$, and choose points $x_1,\dots,x_N \in U \cap \spt \norm{V}$
such that $U \cap \spt \norm{V} \subset \cup_{j = 1}^N B(x_j,r)$.
The \emph{Stability Condition} for the cover
$\{B(x_j,r) \}_{1 \leq j \leq N}$ is defined in the same way as above,
except the radii need not be doubled:
\begin{enumerate}[label={(SC)}]
\item \emph{For large $i$, each $u_i$ is stable in every ball
	$B(x_1,r),\dots,B(x_N,r)$. \label{item:stability_condition_reg}}
\end{enumerate}
\begin{claim}
\label{claim:reg_SC_holds}
If for the cover $\{ B(x_j,r) \}_{1 \leq j \leq N}$:
\begin{enumerate}[label = (\alph*)]
	\item \label{item:reg_SC_holds} \ref{item:stability_condition_reg} holds,
	then $\dimh \sing V \leq n-7$;
	\item \label{item:reg_SC_fails} \ref{item:stability_condition_reg} fails,
	then $\dimh \sing V \setminus \overline{B_r} \leq n-7$
	for some ball $B_r\in \{B(x_j,r)\}$.
\end{enumerate}
\end{claim}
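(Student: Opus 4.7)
The plan is to mirror the structure of Claim~\ref{claim:SH_regularity}, substituting the stable case regularity result of Tonegawa--Wickramasekera~\cite{TonegawaWickramasekera10} for Lemma~\ref{lem:theorem_holds_if_second_ff_bounded}. Part \ref{item:reg_SC_holds} will follow from the stable case applied on each cover ball, and part \ref{item:reg_SC_fails} will be a direct application of the induction hypothesis on a strictly smaller open subset of $U$.

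For \ref{item:reg_SC_holds}, if \ref{item:stability_condition_reg} holds then for all sufficiently large $i$ and every $j$, $u_i$ is stable in $B(x_j,r) \subset U$. Hypotheses \ref{hypA} and \ref{hypB} are trivially preserved under restriction to $B(x_j,r)$, so the restricted sequence $(u_i|_{B(x_j,r)})$ falls under the stable case and yields $\dimh(B(x_j,r) \cap \sing V) \leq n-7$ for each $j$. Since $\sing V \subset U \cap \spt \nv \subset \bigcup_{j=1}^N B(x_j,r)$ and Hausdorff dimension is stable under finite unions, I conclude $\dimh \sing V \leq n-7$.

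For \ref{item:reg_SC_fails}, I would reason exactly as in the proof of Claim~\ref{claim:SH_regularity}\ref{item:reg_SH_fails}: failure of \ref{item:stability_condition_reg} produces a subsequence $(u_{i'})$ and a ball $B_r \in \{B(x_j,r)\}$ with $\ind_{B_r} u_{i'} \geq 1$ for all $i'$. Combining $\ind u_{i'} \leq k$ with index superadditivity across the disjoint open sets $B_r$ and $U \setminus \overline{B_r}$ gives $\ind_{U \setminus \overline{B_r}} u_{i'} \leq k-1$, and applying the induction hypothesis to $(u_{i'})$ on the open subset $U' := U \setminus \overline{B_r} \subset U$ yields $\dimh(U' \cap \sing V) \leq n-7$, which is precisely $\dimh(\sing V \setminus \overline{B_r}) \leq n-7$.

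The only point requiring care — but not a genuine obstacle — is that the induction hypothesis must be stated for all open subsets $U' \subset U$ (with the sequence still satisfying \ref{hypA}--\ref{hyp:C_index_bound} on $U'$), since the step in \ref{item:reg_SC_fails} passes to a strictly smaller open set. Beyond this structural translation of the spectral argument into the language of Hausdorff dimension, the stable case input from~\cite{TonegawaWickramasekera10} does all the geometric work.
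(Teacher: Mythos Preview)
Your proposal is correct and follows essentially the same approach as the paper: for \ref{item:reg_SC_holds} you apply the Tonegawa--Wickramasekera stable regularity on each cover ball and use that Hausdorff dimension is stable under finite unions, and for \ref{item:reg_SC_fails} you extract an unstable subsequence in some ball, use index superadditivity to drop the index on the complement, and invoke the induction hypothesis there. Your observation that the induction hypothesis must be formulated for arbitrary open $U' \subset U$ is exactly how the paper sets it up.
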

\begin{proof}
\ref{item:reg_SC_holds}
The results from \cite{TonegawaWickramasekera10} give
$\dimh B(x_j,r) \cap \sing V \leq n-7$ for every $j=1,\dots,N$.
As the balls $\{B(x_j,r)\}$ cover $U \cap \spt \norm{V}$, the same holds for
$\sing V$.

\ref{item:reg_SC_fails}
When \ref{item:stability_condition_reg} fails,
there must be a subsequence $(u_{i'})$ that is unstable in one of the balls
$B_r$ of the cover, so that in its complement
\begin{equation}
\ind_{U \setminus \overline{B_r}} u_{i'} \leq k-1
\quad \text{for all $i'$.}
\end{equation}
The conclusion follows from the induction hypothesis applied
to $(u_{i'})$ in $U \setminus \overline{B_r}$.
\end{proof}
Now consider a decreasing sequence $r_m \to 0$. For every $m$, choose
points $x_1^m,\dots,x_{N_m}^m \in U \cap \spt \norm{V}$ such that
$U \cap \spt \norm{V} \subset \cup_{j=1}^{N_m} B(x_j^m,r_m)$. 
Then either \ref{item:stability_condition_reg} holds for the cover
$\{ B(x_j^m,r_m)\}$ constructed for some $m$,
in which case we can conclude from
Claim~\ref{claim:reg_SC_holds},
or else there is sequence of points $(y_m)$ in $U \cap \spt \norm{V}$ for which
\begin{equation}
\dimh \sing V \setminus \overline{B}(y_m,r_m) \leq n-7.
\end{equation}
Possibly after extracting a subsequence, the sequence $(y_m)$ converges to a point
$y \in \overline{U} \cap \spt \norm{V}$.
As $U \setminus \{y\} \subset
\cup_{m \geq 0} U \setminus \overline{B}(y_m,r_m)$, we get
$\dimh \left(\sing V \setminus \{y\}\right) \leq n-7$.

If $n \geq 7$, then $\dimh \sing V \leq n-7$ holds whether or not $y \in \sing V$,
as points are zero-dimensional.
If however $2 \leq n \leq 6$ then we need $\sing V = \emptyset$, which
amounts to the following claim.

\begin{claim}
	\label{claim:y_not_in_sing_V}
	If $2 \leq n \leq 6$ then $y \notin \sing V$.
\end{claim}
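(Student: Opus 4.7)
\emph{Proof plan.}
I would proceed by contradiction, supposing $y \in \sing V$ in the range $2 \leq n \leq 6$. Since $\dimh(\sing V \setminus \{y\}) \leq n - 7 < 0$ in this dimension range, $\sing V \setminus \{y\}$ is empty, so the contradictory hypothesis forces $\sing V = \{y\}$: a lone isolated singularity of $V$, with $\reg V$ smooth in a punctured neighbourhood of $y$.

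I would then blow up $V$ at $y$. Choosing $\lambda_j \to 0$ and passing to a subsequence, the rescaled varifolds $(\eta_{y,\lambda_j})_* V$ in normal coordinates converge to a stationary integral cone $C$ in $T_y M \simeq \RR^{n+1}$; since the rescaled varifolds all have $\sing \subseteq \{0\}$ and their regular parts converge smoothly away from the origin (by Allard), one has $\sing C \subseteq \{0\}$. Concurrently, by rescaling the $u_i$ with $\tilde{\eps}_i := \eps_i / \lambda_j$ and extracting diagonally, one obtains critical points of the Allen--Cahn functional on expanding Euclidean balls satisfying Hypotheses \ref{hypA}--\ref{hyp:C_index_bound} with the same $k$, and whose associated varifolds converge to $C$. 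Applying part (ii) of Theorem~\ref{thm:main_thm_propagation_of_index_bounds} at this rescaled level gives $\ind B_C \leq k$.

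The key step is a scaling argument exploiting the cone structure of $C$. If a nonzero $\phi \in C_c^1(\reg C)$ satisfied $B_C(\phi,\phi) < 0$, then by the homogeneity of $C$ (so that $\abs{A_C}^2$ scales as $s^{-2}$), the rescaled functions $\phi_{s}(x) := \phi(x/s)$ satisfy $B_C(\phi_{s},\phi_{s}) = s^{n-2} B_C(\phi,\phi) < 0$. Taking widely-separated scales $s_1 \ll s_2 \ll \cdots \ll s_{k+1}$ yields $k+1$ pairwise-disjointly-supported, hence linearly independent, negative directions for $B_C$ on $\reg C$, contradicting $\ind B_C \leq k$. Hence $C$ is stable. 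By Simons' theorem for $n \leq 6$, every stable minimal cone in $\RR^{n+1}$ with $\sing \subseteq \{0\}$ is a hyperplane through the origin (possibly with integer multiplicity). The flatness of the tangent cone, combined with smoothness of $\spt \norm{V}$ on the punctured neighbourhood of $y$, forces $y \in \reg V$ by a removable-singularity argument for stationary integral varifolds with a flat tangent cone, contradicting $y \in \sing V$.

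The chief obstacle is implementing the rescaled-sequence construction cleanly: one must select the $\lambda_j$ and a diagonal sequence of Allen--Cahn indices $i = i(j)$ so that $\tilde{\eps}_{i(j)} \to 0$ at a rate compatible with varifold convergence to $C$, while preserving the index bound along the diagonal. Once this is in place, the cone-scaling trick and the low-dimensional classification of stable minimal cones close the argument.
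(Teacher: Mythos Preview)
Your approach via tangent-cone analysis is in the spirit of Guaraco's original argument and can be made to work, but the paper takes a deliberately different and more direct route that avoids blow-ups altogether. The paper considers shrinking balls $B(y,R_m)$ with $R_m = 2^{-m}R$. Either some subsequence $(u_{i'})$ has $\ind_{B(y,R_m)} u_{i'} \leq k-1$ for some $m$, in which case the induction hypothesis applied in $B(y,R_m)$ finishes the proof; or else $\ind_{B(y,R_m)} u_i = k$ for large $i$ at every scale $m$, which forces the $u_i$ to be \emph{stable} on every annulus $B(y,R) \setminus \overline{B}(y,R_m)$. Part~\ref{item:main_thm_ind_reg} of Theorem~\ref{thm:main_thm_propagation_of_index_bounds} (already proven) then gives $\ind_{B(y,R)\setminus\{y\}} B_V = 0$, so $\reg V$ is stable in $B(y,R)$, and Wickramasekera's regularity theorem for stable codimension-one integral varifolds yields $\dimh(\sing V \cap B(y,R)) \leq n-7$ directly, whence $y \notin \sing V$. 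No tangent cone is ever taken.

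Your route trades Wickramasekera's theorem for more classical ingredients (Simons' classification and Allard), but incurs extra work at three places you gloss over. First, the rescaled Allen--Cahn critical points live on expanding nearly-Euclidean balls rather than in an open subset of a closed manifold, so you must verify that the relevant parts of Theorem~\ref{thm:main_thm_propagation_of_index_bounds} go through in that setting. Second, you assert $\sing C \subseteq \{0\}$ ``by Allard'', but varifold limits of smooth hypersurfaces need not be smooth; this requires the index bound on the rescalings of $V$ together with a compactness result for bounded-index minimal hypersurfaces, plus the cone structure of $C$. Third, the removable-singularity step is not elementary when the tangent plane has multiplicity $m \geq 2$: Allard alone does not apply, and you must argue via sheet-by-sheet smooth extension and the strong maximum principle. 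The paper sidesteps all three issues by establishing stability of $V$ itself (not merely of its tangent cone) on the punctured ball and invoking the stable regularity theory once.
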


\begin{proof}
Choose $B(y,R) \subset U$, and
consider balls $\{B(y,R_m)\}_{m \in \NN}$ with
shrinking radii $R_m := 2^{-m}R$.
If for some $m$ there is a subsequence $(u_{i'})$ with
\begin{equation}
\ind_{B(y,R_m)} u_{i'} \leq k-1
\quad \text{for all $i'$,}
\end{equation}
then we can conclude from the induction hypothesis.
Otherwise for all $m$
\begin{equation}
\ind_{B(y,R_m)} u_{i} = k
\quad \text{for $i$ large enough,}
\end{equation}
and the $u_i$ are eventually stable in the annulus $B(y,R) \setminus \overline{B}(y,R_m)$.
By Theorem~\ref{thm:main_thm_propagation_of_index_bounds}\ref{item:main_thm_ind_reg}
\begin{equation}
\ind_{B(y,R) \setminus \overline{B}(y,R_m)} B_V = 0
\text{ for all $R_m \to 0$,}
\end{equation}
and thus $\ind_{B(y,R) \setminus \{y\}} B_V = 0$.

By contradiction, suppose that $y \in \sing V$. Then $\ind_{B(y,r)} B_V = 0$
holds in the whole ball $B(y,r)$ away from $\sing V$, and
the regularity results of~\cite{Wickramasekera14} give
$\dimh B(y,R) \cap \sing V \leq n-7$, so that $y \notin \sing V$.
\end{proof}
Claim~\ref{claim:y_not_in_sing_V} concludes the induction step;
together with the base of induction, we have proved that
$\dimh \sing V \leq n-7$. This finishes the proof of
Theorem~\ref{thm:main_thm_propagation_of_index_bounds}.

\appendix

\section{Measure-function convergence}
\label{app:measure_function_convergence}

In this appendix we give two elementary measure-theoretical lemmas
that are used in the proofs of
Lemma~\ref{lem:min_max_convergence_when_C_c_convergence} and in
Appendix~\ref{app:second_fundamental_form_coordinate_expression}.
Essentially they give a weak compactness result for sequences of the form
$(f_i \dmui)_{i \in \NN }$, with $\mu_i$ Radon measures
and $f_i \in L^2(\mu_i)$.
The weak convergence in question is sometimes called
\emph{measure-function convergence} in the literature.
It appears in the work of Hutchinson~\cite{Hutchinson86} on
so-called \emph{curvature varifolds}; there one also finds a proof of
Lemma~\ref{lem:measure_function_convergence} under more general hypotheses
on the sequence $(f_i)$.

\begin{lem}[\cite{Hutchinson86,Tonegawa05}]
	\label{lem:measure_function_convergence}
	Let $X$ be a locally compact Hausdorff space, let $(\mu_i)_{i \in \NN}$ be
	a sequence of Radon measures on $X$, and $(f_i: X \to \RR)_{i \in \NN}$ be a sequence
	of Borel-measurable functions. Suppose that
	\begin{align}
	\sup_i \mu_i(X) &< +\infty, \\
	\sup_i \int_X  f_i ^2 \intdiff \mu_i &< +\infty.
	\end{align}
	Then there is a Radon measure $\mu$ and $f \in L^2(\mu)$
	such that for some subsequence
	$\mu_{i'} \rightharpoonup \mu$ and $f_{i'} \intdiff \mu_{i'}
	\rightharpoonup f \intdiff \mu$ weakly as Radon measures, i.e.\
	\begin{equation}
	\int_X  f_{i'} \phi \intdiff \mu_{i'}
	\to
	\int_X  f  \phi  \intdiff \mu
	\quad \text{for all} \; \phi \in C_c(X).
	\end{equation}
	Moreover, the weak limit $f \intdiff \mu$ satisfies
	\begin{equation}
	\label{eq:measure_function_convergence_L2_bound}
	\int_X f^2 \intdiff \mu
	\leq \liminf_{i \to \infty} \int_X f_i^2 \intdiff \mu_i. 
	\end{equation}
\end{lem}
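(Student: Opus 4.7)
The plan is to follow a standard two-step weak compactness argument, first for the measures $\mu_i$ themselves and then for the sequence of signed measures $f_i\dmui$, and finally to identify the limit as having an $L^2(\mu)$ density by Riesz representation.

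First I would extract a subsequence $(\mu_{i'})$ converging weakly to some Radon measure $\mu$ on $X$: since $\sup_i \mu_i(X) < \infty$, this is the standard weak-$*$ compactness of $C_c(X)^*$ (Banach--Alaoglu combined with the Riesz representation theorem), valid because $X$ is locally compact Hausdorff. Next I would show that $(f_i\dmui)$ has uniformly locally bounded total variation: for any compact $K \subset X$ and any $\phi \in C_c(X)$ with $\spt \phi \subset K$ and $\abs{\phi} \leq 1$, Cauchy--Schwarz gives
\begin{equation}
\Bigl\lvert \int \phi f_i \dmui \Bigr\rvert
\leq \Bigl( \int \phi^2 \dmui \Bigr)^{1/2} \Bigl( \int f_i^2 \dmui \Bigr)^{1/2}
\leq \mu_i(K)^{1/2} \sup_j \norm{f_j}_{L^2(\mu_j)},
\end{equation}
and $\mu_i(K)$ is bounded using $\mu_{i'} \rightharpoonup \mu$ and a slightly enlarged test function. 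A second extraction then yields a signed Radon measure $\nu$ with $f_{i'}\dmuip \rightharpoonup \nu$.

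The main step, and the only place where some care is needed, is to identify $\nu$ as $f\dmu$ for some $f \in L^2(\mu)$. For $\phi \in C_c(X)$, passing to the limit in Cauchy--Schwarz gives
\begin{equation}
\Bigl\lvert \int \phi \intdiff \nu \Bigr\rvert
= \lim_{i' \to \infty} \Bigl\lvert \int \phi f_{i'} \dmuip \Bigr\rvert
\leq \Bigl(\int \phi^2 \dmu\Bigr)^{1/2} \cdot L^{1/2},
\end{equation}
where $L := \liminf_i \int f_i^2 \dmui$ and I used $\int \phi^2 \dmuip \to \int \phi^2 \dmu$ because $\phi^2 \in C_c(X)$. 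Thus $\phi \mapsto \int \phi \intdiff \nu$ extends by density of $C_c(X) \subset L^2(\mu)$ to a bounded linear functional on $L^2(\mu)$, and Riesz--Fréchet furnishes $f \in L^2(\mu)$ with $\int \phi \intdiff \nu = \int \phi f \dmu$ for all $\phi \in C_c(X)$. This identifies $\nu = f\dmu$ as signed Radon measures.

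Finally the $L^2$--lower-semicontinuity bound \eqref{eq:measure_function_convergence_L2_bound} follows by taking the supremum of the displayed inequality over $\phi \in C_c(X)$ with $\int \phi^2 \dmu \leq 1$: by density this supremum equals $\norm{f}_{L^2(\mu)}$, giving $\int f^2 \dmu \leq L$. The hard part is really the identification step — ensuring the limit measure $\nu$ is absolutely continuous with respect to $\mu$ — but the uniform $L^2$ bound on $f_i$ and the convergence $\mu_{i'} \rightharpoonup \mu$ together encode exactly the Cauchy--Schwarz estimate that forces this absolute continuity and supplies the density in $L^2(\mu)$.
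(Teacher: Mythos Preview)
Your proposal is correct and follows essentially the same route as the paper's proof: extract weak subsequential limits $\mu$ and $\nu$ by compactness, use Cauchy--Schwarz together with $\int \phi^2 \dmuip \to \int \phi^2 \dmu$ to show that $\phi \mapsto \int \phi \intdiff \nu$ extends to a bounded linear functional on $L^2(\mu)$, and then invoke Riesz representation to obtain $f \in L^2(\mu)$ with the stated norm bound. The paper's argument is slightly terser (it bounds the total variation of $f_i\dmui$ directly rather than locally), but the ideas and structure are the same.
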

\begin{rem}
	In our applications $X$ is either an open subset of $U \subset M$
	or its Grassmannian $G_n(U)$, and $\mu_i$ is either $\nvi$ or $\vi$.
\end{rem}

\begin{proof}
	The signed Radon measures $\nu_i := f_i \dmui$ have bounded total variation,
	so that the sequences $(\mu_i)$ and $(\nu_i)$ have convergent subsequences,
	with limits the Radon measures $\mu$ and $\nu$ respectively. We extract
	these subsequences without relabelling their indices.
	
	Consider an arbitrary $\phi \in C_c(X)$.
	By the weak convergence $\nu_i \rightharpoonup \nu$,
	\begin{equation}
	\int \phi \intdiff \nu = \lim_{i \to \infty} \int \phi f_i \dmui
	\leq \norm{\phi}_{L^2(\mu)}
	\liminf_{i \to \infty} \norm{f_i}_{L^2(\mu_i)},
	\end{equation}
	where we used the weak convergence $\mu_i \rightharpoonup \mu$ to get
	$\lim_{i \to \infty} \norm{\phi}_{L^2(\mu_i)} = \norm{\phi}_{L^2(\mu)}$.
	As $C_c(X)$ is dense in $L^2(\mu)$, the measure $\nu$ defines a
	bounded linear functional on $L^2(\mu)$, and by duality there is
	$f \in L^2(\mu)$ with 
	$\norm{f}_{L^2(\mu)} \leq \liminf_{i \to \infty} \norm{f_i}_{L^2(\mu_i)}$ such that $\nu = f \dmu$.
\end{proof}
If the densities $f_i$ are in $C_c(X)$ and converge strongly, then
their limit coincides with the density of the weak limit of
$(f_i \dmui)$.
\begin{cor}
	\label{cor:meas_fct_cvg}
	Additionally to the hypotheses of Lemma~\ref{lem:measure_function_convergence},
	assume that $f_i \in C_c(X)$,
	and that $\norm{f_i - f}_{L^\infty} \to 0$ for some $f \in C_c(X)$.
	Then, additionally
	to the conclusions of Lemma~\ref{lem:measure_function_convergence},
	\begin{equation}
	\label{eq:meas_fct_cvg_limit}
	\int_X f^2 \dmu = \lim_{i \to \infty} \int_X f_i^2 \dmui.
	\end{equation}
\end{cor}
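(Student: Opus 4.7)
The plan is to bootstrap from the weak convergence $\mu_i \rightharpoonup \mu$ (already established in the proof of Lemma~\ref{lem:measure_function_convergence}) by exploiting that $f_i \to f$ in $L^\infty$ with uniformly bounded total masses $\sup_i \mu_i(X) < \infty$. The triangle inequality split is
\[
\Big| \int_X f_i^2 \dmui - \int_X f^2 \dmu \Big|
\leq \Big| \int_X (f_i^2 - f^2) \dmui \Big|
+ \Big| \int_X f^2 \dmui - \int_X f^2 \dmu \Big|,
\]
and I would control each term separately.

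For the second term, note that $f \in C_c(X)$ implies $f^2 \in C_c(X)$, so the weak convergence $\mu_i \rightharpoonup \mu$ tested against $f^2$ gives $\int f^2 \dmui \to \int f^2 \dmu$. For the first term, factor $f_i^2 - f^2 = (f_i - f)(f_i + f)$. Since $\norm{f_i - f}_{L^\infty} \to 0$, the sequence $\norm{f_i}_{L^\infty}$ is bounded, say by some constant $C < \infty$, and hence $\norm{f_i^2 - f^2}_{L^\infty} \leq (C + \norm{f}_{L^\infty}) \norm{f_i - f}_{L^\infty} \to 0$. Combining this with the uniform bound $\sup_i \mu_i(X) < \infty$ gives
\[
\Big| \int_X (f_i^2 - f^2) \dmui \Big|
\leq \norm{f_i^2 - f^2}_{L^\infty} \mu_i(X)
\to 0.
\]
Adding the two estimates yields \eqref{eq:meas_fct_cvg_limit}.

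One minor point I would want to remark on (to keep the logic airtight) is that the $f$ produced by Lemma~\ref{lem:measure_function_convergence} as the density of the weak limit of $f_i \dmui$ agrees $\mu$-almost everywhere with the uniform limit of the $f_i$. This follows by repeating the triangle inequality split above with an arbitrary test function $\phi \in C_c(X)$ in place of $f^2$: the first piece goes to zero by uniform convergence and the mass bound, and the second by $\mu_i \rightharpoonup \mu$ applied to $\phi f \in C_c(X)$. Hence $f_i \dmui \rightharpoonup f \dmu$ with the uniform limit $f$, which by uniqueness of weak limits identifies the two candidates. There is no serious obstacle in this corollary; the only subtlety worth flagging is the consistency of notation just described.
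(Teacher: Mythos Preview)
Your proof is correct and follows essentially the same approach as the paper: the same triangle-inequality split into a term handled by $\norm{f_i^2 - f^2}_{L^\infty}\,\mu_i(X) \to 0$ and a term handled by testing $\mu_i \rightharpoonup \mu$ against $f^2 \in C_c(X)$, together with the same verification that $f_i \dmui \rightharpoonup f \dmu$ (which you place as a closing remark, whereas the paper does it first).
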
 
\begin{proof}
	We first show that $f_i \intdiff \mu_i \rightharpoonup f \intdiff \mu$.
	Let $\varphi \in C_c(X)$ be arbitrary, then
	\begin{equation}
	\int f_i \varphi \dmui - \int f \varphi \dmu
	= \int(f_i - f) \varphi \dmui
	+ \int f \varphi \dmui - \int f  \varphi \dmu.
	\end{equation}
	The first term $\abs{\int(f_i - f) \varphi \dmui}
	\leq \norm{f_i - f}_{L^\infty} \norm{\varphi}_{L^1(\mu_i)} \to 0$
	as $i \to \infty$.
	The remaining terms $\int f \varphi \dmui - \int f \varphi \dmu
	\to 0$ as $i \to \infty$ by the weak convergence $\mu_i \rightharpoonup \mu$.
	We reason similarly to show \eqref{eq:meas_fct_cvg_limit}:
	\begin{equation}
	\left| \int_X f^2 \dmu - \int_X f_i^2 \dmui \right|
	\leq \left| \int_X f^2 \dmu - \int_{X} f^2 \dmui\right|
	+ \mu_i(X) \norm{f_i^2 - f^2}_{L^\infty} .
	\end{equation}
	The first term goes to $0$ by the weak convergence $\mu_i \rightharpoonup \mu$,
	and so does the second as $\sup_i \mu_i(X) < +\infty$ and
	$\norm{f_i^2 - f^2}_{L^\infty} \to 0$.
\end{proof}

\section{Generalised second fundamental forms}
\label{app:second_fundamental_form_coordinate_expression}

Our main aim in this appendix is to give a proof of Proposition~\ref{prop:weak_convergence_second_ff}.
We follow the approach of~\cite{Tonegawa05}, where the case of stable $u_i$ is treated using notions from~\cite{Hutchinson86}.
Our account is self-contained but for the fact that we refer to
these two works for some technical, but routine calculations.

Throughout this section, we assume that $U \subset M$ is isometrically
embedded in some $\RR^q$, and $W \subset \subset U \setminus \sing V$ is
an open subset with $W \cap \reg V \neq \emptyset$.
The fibre of the Grassmannian $G_n(U)$ at $x \in U$ is
identified with the subspaces
\begin{equation}
\{S \subset \RR^q \mid S \subset T_x M, \dim S = n \}
\subset  U \times G(n,q), \end{equation}
where $G(n,q) = \{S \subset \RR^q \mid \dim S = n\}$.
We furthermore identify an element $S \in U \times G(n,q)$
with the corresponding orthogonal projection $\RR^q \to S$,
so that $G_n(U) \subset U \times \RR^{q^2}$.
Throughout, $P(x) \in \RR^{q^2}$ represents the orthogonal projection
$\RR^q \to T_x M$ and $(e_1,\dots,e_q)$ is the standard basis of $\RR^q$;
$\partial_i$ and $\partial^*_{ij}$ denote differentiation with respect to $e_i$ and
$e_i \otimes e_j \in \RR^{q^2}$ respectively.

Consider first a smooth embedded hypersurface $\Sigma \subset U$,
which we implicitely identify with the varifold $V_\Sigma := V_{\Sigma,1}$ with constant multiplicity.
We consider test functions $\phi \in C^1(U \times \RR^{q^2})$
with compact spatial support, that is with compact support in the
first variable.
We associate to it a function $\varphi \in C_c^1(\Sigma)$ defined by
$\varphi(x) = \phi(x,S^\Sigma(x))$,
where $S^\Sigma(x) \in \RR^{q^2}$ is the orthogonal projection
$\RR^q \to T_x \Sigma$.
Define a vector field $X \in C_c^1(\Sigma,TM)$ by $X = \varphi P(e_j)$, where $e_j$ is one of the standard basis vectors.
Its component tangential to $\Sigma$ is
$\varphi S^\Sigma(e_j)$, and by the standard divergence theorem
we get $\int_\Sigma \div_\Sigma (\varphi S^\Sigma e_j) = 0$.
A routine calculation shows that in coordinates
\begin{equation}
\label{eq:div_coord_express}
\div_\Sigma (\varphi S^\Sigma e_j)
= S^\Sigma_{rj} \pd_r \phi + \phi S^\Sigma_{ri} \pd_i S^\Sigma_{jr}
+  S^\Sigma_{ji} \pd_i S^\Sigma_{kr} \pd^*_{kr} \phi,
\end{equation}
with summation over repeated indices~\cite{Hutchinson86}.
Abbreviate $B^\Sigma_{jkr} = S^\Sigma_{ji} \pd_i S^\Sigma_{kr}$
and substitute this into the divergence formula:
\begin{align}
0 &= \int_\Sigma S^\Sigma_{rj} \pd_r \phi
+ B_{rjr}^\Sigma \phi
+ B_{jkr}^\Sigma \pd^*_{kr} \phi \dH^n \\
&= \int_{G_n(U)} S_{rj} \pd_r \phi
+ B_{rjr}^\Sigma \phi
+ B_{jkr}^\Sigma \pd^*_{kr} \phi \intdiff V_\Sigma(x,S). \label{eq:gen_sec_ff_def_id}
\end{align}
This identity is the basis of the following definition
by Hutchinson~\cite{Hutchinson86}.
\begin{defn}[Generalised curvature, \cite{Hutchinson86}]
\label{defn:generalised_curvature}
An $n$-dimensional integral varifold $V$ in $U$
is said to have \emph{generalised curvature} if there exists
a function $B = (B_{ijk})$ with values in
$\RR^{q^3}$ defined $V$-a.e.\ on $G_n(U)$ with
\begin{enumerate}[label = (\alph{*})]
	\item $B \in L^1_{\mathrm{loc}}(V)$, \label{item:gen_curv_a}
	\item \label{item:gen_curv_b}
	$\int_{G_n(U)} S_{rj} \pd_r \phi 
	+ B_{rjr} \phi
	+ B_{jkr} \pd^*_{kr} \phi \intdiff V(x,S) = 0$ for
	all $\phi \in C^1(U \times \RR^{q^2})$ with compact spatial support.
\end{enumerate}
\end{defn}
The following lemma shows that the function $B$ is well-defined $V$-a.e.\ on $G_n(U)$;
it is taken from~\cite{Hutchinson86}.
\begin{lem}[\cite{Hutchinson86}]
	Any two $B$ and $\widetilde{B}$
	satisfying \ref{item:gen_curv_a}~and~\ref{item:gen_curv_b} coincide
	$V$-a.e.\  on $G_n(U)$.
\end{lem}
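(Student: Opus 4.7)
The plan is to take the difference $D = B - \widetilde{B}$ and use the integral identity satisfied by both $B$ and $\widetilde{B}$ to derive a vanishing identity for $D$. Subtracting the two identities (the first-order terms $S_{rj} \partial_r \phi$ cancel immediately since they do not involve the curvature), one obtains
\begin{equation*}
\int_{G_n(U)} D_{rjr}(x,S)\, \phi(x,S) + D_{jkr}(x,S)\, \partial^*_{kr} \phi(x,S) \, \intdiff V(x,S) = 0
\end{equation*}
for all $\phi \in C^1(U \times \RR^{q^2})$ with compact spatial support.

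The strategy is to localise both in the base variable $x$ and in the fibre variable $S$ by testing with product functions $\phi(x,S) = \eta(x) \chi(S)$, where $\eta \in C_c^1(U)$ and $\chi \in C^1(\RR^{q^2})$. Since $V$ is integer rectifiable, $V = V_{\Sigma,\theta}$, so the identity rewrites as
\begin{equation*}
\int_\Sigma \eta(x)\, \theta(x) \bigl[D_{rjr}(x,T_x\Sigma)\, \chi(T_x\Sigma) + D_{jkr}(x,T_x\Sigma)\, \partial^*_{kr}\chi(T_x\Sigma) \bigr] \intdiff \calH^n(x) = 0.
\end{equation*}
Varying $\eta$ and invoking the Lebesgue differentiation theorem gives, for each fixed $\chi$, that the bracketed expression vanishes for $\calH^n$-a.e.\ $x \in \Sigma$.

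To promote this to a pointwise identity in $\chi$, I would fix a countable family $\{\chi_n\}$ dense in $C^1(\RR^{q^2})$ (on compact subsets), take the intersection of the full-measure sets obtained above, and extend by density to all $\chi \in C^1$. At any $x$ in this full-measure set, the bracketed expression then vanishes for all $\chi$. Since the value $\chi(T_x\Sigma)$ and each partial derivative $\partial^*_{kr}\chi(T_x\Sigma)$ can be prescribed independently (e.g.\ by linear functions in the $S$-variable), choosing $\chi$ with $\chi(T_x\Sigma) = 1$ and vanishing derivatives yields $D_{rjr}(x,T_x\Sigma) = 0$, while choosing $\chi$ vanishing at $T_x\Sigma$ with a single non-zero partial derivative yields $D_{jkr}(x,T_x\Sigma) = 0$ for each $j,k,r$. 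Combined with the fact that the disintegration of $V$ along the fibres of $G_n(U) \to U$ is concentrated at $(x,T_x\Sigma)$ for $\calH^n$-a.e.\ $x \in \Sigma$, this shows $D \equiv 0$ $V$-a.e.

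The only real subtlety will be the countable-dense argument that switches the order of the quantifiers over $\chi$ and over $x$: naively, the $V$-null set on which the identity fails depends on the choice of $\chi$, and I need to pick a dense family whose $C^1$-closure lets me independently prescribe both $\chi(T_x\Sigma)$ and $\partial^*_{kr}\chi(T_x\Sigma)$. Once that is handled, the rest is essentially routine.
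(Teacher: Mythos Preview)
Your approach is correct and follows the same idea as the paper --- test with product functions $\phi(x,S)=\eta(x)\chi(S)$, vary $\eta$ to get a $V$-a.e.\ identity, then choose $\chi$ to isolate the components of $D$ --- but you have overcomplicated the second half. The countable-dense-family argument and the quantifier swap you flag as ``the only real subtlety'' are unnecessary: the paper simply picks \emph{finitely many} explicit $\chi$'s globally. First $\chi\equiv 1$ gives $\int \eta\, D_{rjr}\,\intdiff V=0$ for all $\eta$, hence $D_{rjr}=0$ $V$-a.e.; then for each fixed pair $(k,r)$ the linear function $\chi(S)=S_{kr}$ (so $\partial^*_{k'r'}\chi=\delta_{kk'}\delta_{rr'}$) gives, using the first step, $\int \eta\, D_{jkr}\,\intdiff V=0$, hence $D_{jkr}=0$ $V$-a.e. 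Since this is only $1+q^2$ choices of $\chi$, intersecting the resulting full-measure sets is trivial and no density argument is needed. Note also that this route avoids explicitly invoking rectifiability or Lebesgue differentiation: the vanishing of $\int \eta f\,\intdiff\lVert V\rVert$ for all $\eta\in C_c^1(U)$ already forces $f=0$ $\lVert V\rVert$-a.e.
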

\begin{proof}
	Let $\phi (x,S) = \alpha(x) \beta(S)$, where $\alpha \in C_c^1(U)$ and $\beta \in C^1(\RR^{q^2})$.
	Letting $\beta \equiv 1$ we see that
	$
	\int B_{rjr} \alpha \dv = \int \widetilde{B}_{rjr} \alpha \dv$, and thus
	$B_{rjr} = \widetilde{B}_{rjr}$ $V$-a.e.\ on $G_n(U)$. If we now let
	\begin{equation}
	\beta(S) = \begin{cases}
	1  &\text{if } S = S_{kr}\\
	0 &\text{otherwise}
	\end{cases}
	\end{equation}
	then $\int B_{jkr} \alpha \dv = \int \widetilde{B}_{jkr} \alpha \dv$,
	whence the conclusion follows.
\end{proof}
In particular, applied to the smooth hypersurfaces, the following is an immediate consequence.
\begin{cor}
	\label{cor:uniqueness_gen_curv}
	If $\Sigma$ is a smoothly embedded hypersurface, then $B_{ijk}(x,S) = B_{ijk}^\Sigma(x,S)$
	for $V_\Sigma$-a.e. $(x,S) \in G_n(U)$, where $B_{ijk}^\Sigma(x,S) = S_{il} \pd_l S^\Sigma_{jk}$.
\end{cor}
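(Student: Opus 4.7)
The plan is to derive this as an immediate consequence of the uniqueness lemma stated just above the corollary, by recognising that the candidate formula $B^\Sigma$ has already been shown to satisfy the defining identity of generalised curvature for $V_\Sigma$.

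First, I would observe that the computation carried out earlier in this appendix, culminating in identity~\eqref{eq:gen_sec_ff_def_id}, is precisely a verification that the smooth function
\begin{equation*}
B^\Sigma_{jkr}(x) = S^\Sigma_{ji}(x)\, \pd_i S^\Sigma_{kr}(x)
\end{equation*}
satisfies condition~(b) of Definition~\ref{defn:generalised_curvature} against $V_\Sigma$, for every test function $\phi \in C^1(U \times \RR^{q^2})$ with compact spatial support. The only step that was needed there was the tangential divergence theorem applied to the vector field $X = \varphi P e_j$ on the smoothly embedded hypersurface $\Sigma$, and the coordinate identity~\eqref{eq:div_coord_express} for $\div_\Sigma (\varphi S^\Sigma e_j)$; nothing more is required for this direction.

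Next, I would check condition~(a), local integrability: since $\Sigma \subset U$ is smoothly embedded, the projection map $x \mapsto S^\Sigma(x)$ is smooth on $\Sigma$, hence so are its first derivatives, so $B^\Sigma$ is bounded on compact subsets of $\Sigma$. As $\norm{V_\Sigma}$ coincides with $\calH^n\llcorner \Sigma$ and is locally finite, this gives $B^\Sigma \in L^1_{\mathrm{loc}}(V_\Sigma)$. Consequently $V_\Sigma$ has generalised curvature in the sense of Definition~\ref{defn:generalised_curvature}, with the explicit representative $B^\Sigma$. The preceding uniqueness lemma then forces any other such representative to agree with $B^\Sigma$ $V_\Sigma$-almost everywhere on $G_n(U)$, which is exactly the assertion of the corollary.

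The only small point requiring care is the notational discrepancy between the representative produced by the computation, $S^\Sigma_{il}(x)\, \pd_l S^\Sigma_{jk}(x)$, and the version written in the corollary statement, $S_{il}\, \pd_l S^\Sigma_{jk}$, where $S$ is the Grassmannian coordinate. Since $V_\Sigma$ is concentrated on the graph $\{(x,S^\Sigma(x)) : x \in \Sigma\} \subset G_n(U)$, one has $S = S^\Sigma(x)$ for $V_\Sigma$-a.e.\ $(x,S)$, so the two expressions agree on the support of $V_\Sigma$ and represent the same element of $L^1_{\mathrm{loc}}(V_\Sigma)$. There is no genuine obstacle here: the entire content of the corollary is the juxtaposition of the earlier explicit computation with the uniqueness result, and the main (trivial) bookkeeping lies in matching conventions for $B^\Sigma$ on and off $\spt V_\Sigma$.
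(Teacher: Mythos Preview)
Your proposal is correct and follows exactly the approach the paper intends: the corollary is stated as an immediate consequence of the uniqueness lemma, using that the earlier computation~\eqref{eq:gen_sec_ff_def_id} already verifies $B^\Sigma$ satisfies the defining identity for $V_\Sigma$. Your additional remarks on local integrability and on reconciling $S$ with $S^\Sigma$ on $\spt V_\Sigma$ are helpful elaborations but do not depart from the paper's reasoning.
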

The following
elementary calculation relates $B^\Sigma$ to the second fundamental form $A^\Sigma$.
\begin{lem}
	\label{lem:sec_form_coordinate_expression}
	Let $A^\Sigma$ be the second fundamental form of a smoothly embedded
	hypersurface $\Sigma \subset U$. Then
	\begin{equation}
	\label{eq:second_ffs_coincide}
	\langle A^\Sigma(S^\Sigma e_i,S^\Sigma e_j),Pe_k \rangle
	=  P_{kr} S^\Sigma_{js} S^\Sigma_{il} \pd_l S^\Sigma_{rs}
	= P_{kr}  S^\Sigma_{js} B_{irs}^\Sigma(x,S^\Sigma).
	\end{equation} 
\end{lem}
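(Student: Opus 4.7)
My plan is to use the standard extrinsic description of the second fundamental form via projections in $\RR^q$, and then convert the resulting expression into the claimed coordinate form using the identity obtained by differentiating the projection relation $(S^\Sigma)^2 = S^\Sigma$.

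First I would recall that for tangent vector fields $X,Y$ on $\Sigma$, the Gauss formula combined with the extrinsic representations $\nabla^M_X Y = P D_X Y$ and $\nabla^\Sigma_X Y = S^\Sigma D_X Y$ of the two Levi--Civita connections (where $D$ is the flat ambient derivative on $\RR^q$) yields
\[
A^\Sigma(X,Y) = (P - S^\Sigma) D_X Y.
\]
Specialising to $X = S^\Sigma e_i$ and $Y = S^\Sigma e_j$ gives the coordinate expression $(D_X Y)_s = S^\Sigma_{il}\, \pd_l S^\Sigma_{js}$ after suitably extending $S^\Sigma e_j$ off $\Sigma$ (the answer being independent of the extension). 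I would then pair with $Pe_k$. Using the symmetry of $P$ and $S^\Sigma$ together with the inclusion $T_x\Sigma \subset T_xM$, which forces $PS^\Sigma = S^\Sigma = S^\Sigma P$ and hence $(P-S^\Sigma) P = P - S^\Sigma$, the pairing reduces to
\[
\langle A^\Sigma(S^\Sigma e_i, S^\Sigma e_j), Pe_k \rangle = (P_{ks} - S^\Sigma_{ks})\, S^\Sigma_{il}\, \pd_l S^\Sigma_{js}.
\]

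Second, I would reconcile this expression with the target $P_{kr}\, S^\Sigma_{js}\, S^\Sigma_{il}\, \pd_l S^\Sigma_{rs}$. Differentiating the idempotency relation $(S^\Sigma)^2 = S^\Sigma$ gives the product rule
\[
\pd_l S^\Sigma_{rs} \cdot S^\Sigma_{sj} + S^\Sigma_{rs} \cdot \pd_l S^\Sigma_{sj} = \pd_l S^\Sigma_{rj},
\]
and contracting both sides with $P_{kr}\, S^\Sigma_{il}$, using $P_{kr} S^\Sigma_{rs} = S^\Sigma_{ks}$ once more together with the symmetries $P_{kr} = P_{rk}$ and $S^\Sigma_{rs} = S^\Sigma_{sr}$, converts the target into precisely the displayed expression for $\langle A^\Sigma(S^\Sigma e_i, S^\Sigma e_j), Pe_k \rangle$. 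The second equality in the statement is the definition of $B^\Sigma_{irs}$.

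The whole argument is algebraic, so the only work lies in careful index bookkeeping; there is no genuine analytic obstacle. The one subtle point---once one accepts the extrinsic formula for $A^\Sigma$---is recognising that the apparent asymmetry between the $(i,l)$-slot and the $(r,s)$-slot in $B^\Sigma_{irs}$ is absorbed exactly by the differentiated idempotency identity for $S^\Sigma$.
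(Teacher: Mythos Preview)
Your proof is correct and follows essentially the same route as the paper's: both arrive at the intermediate expression $(P_{ks}-S^\Sigma_{ks})\,S^\Sigma_{il}\,\pd_l S^\Sigma_{js}$ (the paper writes it as $P_{kr}(\delta_{rs}-S^\Sigma_{rs})\,D_{S^\Sigma e_i}S^\Sigma_{js}$, which simplifies to the same thing via $P_{kr}S^\Sigma_{rs}=S^\Sigma_{ks}$), and both then convert this to the target using the differentiated idempotency relation $(S^\Sigma)^2=S^\Sigma$. The only cosmetic difference is that you reach the intermediate step via the projection formula $A^\Sigma=(P-S^\Sigma)D$ together with $(P-S^\Sigma)P=P-S^\Sigma$, whereas the paper writes out $e_r^{\perp_\Sigma}=(\delta_{rs}-S^\Sigma_{rs})e_s$ explicitly.
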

\begin{proof}
Write $A$ instead of $A^\Sigma$ in this proof to simplify notation.
The covariant derivative on $M$ is the component of
$D = \nabla^{\RR^q}$ tangent to $M$, so
$A = (D^{T_M})^{\perp_\Sigma} = (D^{\perp_\Sigma})^{T_M}$.
As $e_k^{T_M} = P_{kr} e_r$, we get
\begin{align}
\begin{split}
A^k_{ij} &:= \langle A(S^\Sigma e_i, S^\Sigma e_j), e_k^{T_M} \rangle \\
&= \langle (D_{S^\Sigma e_i} S^\Sigma e_j)^{\perp_\Sigma},P e_k \rangle 
= P_{kr} \langle D_{S^\Sigma e_i} S^\Sigma e_j,e_r^{\perp_\Sigma} \rangle.
\end{split}
\end{align}
Similarly $e_r^{\perp_\Sigma} = (\delta_{rs} - S^\Sigma_{rs})e_s$, so
\begin{equation}
\label{eq:appendix_intermediary_identity_for_A}
A_{ij}^k = P_{kr}(\delta_{rs} - S^\Sigma_{rs}) \langle D_{S^\Sigma e_i} S^\Sigma e_j,e_s \rangle
= P_{kr} (\delta_{rs} - S^\Sigma_{rs}) D_{S^\Sigma e_i} S^\Sigma_{js}.
\end{equation}
As $S^\Sigma_{rs} S^\Sigma_{js} = S^\Sigma_{rj}$, we finally get
$A_{ij}^k = P_{kr} S^\Sigma_{js} D_{S^\Sigma e_i} S^\Sigma_{rs} = P_{kr} S^\Sigma_{js} S^\Sigma_{il} \pd_l S^\Sigma_{rs}$,
as required.
\end{proof}
This expression can then be used to generalise second fundamental forms
from the smooth to the varifold setting.
\begin{defn}[Generalised second fundamental forms, \cite{Hutchinson86}]
	Let $V$ be an integral $n$-varifold with generalised curvature.
	Then its \emph{generalised second fundamental form} is the function
	$A = (A_{ij}^k)$ with values in $\RR^{q^3}$ and defined
	at $V$-a.e.\ $(x,S) \in G_n(U)$ by
	\begin{equation}
	A_{ij}^k(x,S) = P_{kr} S_{js} B_{irs}.
	\end{equation}
\end{defn}

For a smoothly embedded $\Sigma \subset U$, we see after
combining Corollary~\ref{cor:uniqueness_gen_curv} with
Lemma~\ref{lem:sec_form_coordinate_expression} that the generalised
second fundamental form $A$ of $V_\Sigma$ is equal to the classical second
fundamental form $A^\Sigma$ in the sense that
\begin{equation}
A_{ij}^k(x,S) = \langle A^\Sigma(S e_i,S e_j),Pe_k \rangle
\quad \text{for $V_\Sigma$-a.e.\ $(x,S) \in G_n(U)$}.
\end{equation}

We now want to relate these notions to the varifolds $V^i$ defined
in the main body. To simplify notation, we fix a critical point $u = u_i$
with associated varifold $V^\eps = V^i$.
We define a `second fundamental form' for $V^\eps$ using the
coordinate expressions from Lemma~\ref{lem:sec_form_coordinate_expression}.
\begin{defn}
	\label{defn:second_ff_app_grass}
	Define the functions $A^\eps = (A^{\eps,k}_{ij})$ and $B^\eps = (B^\eps_{ijk})$
	with values in $\RR^{q^3}$ at all $(x,S) \in G_n(U)$ where
	$\nabla u(x) \neq 0$ by
	\begin{align}
	B^\eps_{ijk}(x,S) &= S_{il} \pd_l S_{jk}^\eps, \label{eq:be_defn} \\
	A^{k,\eps}_{ij}(x,S) &= P_{kr} S_{js} S_{il} \pd_l S_{rs}^\eps = P_{kr} S_{js} B^\eps_{irs},
	\end{align}
	where $S^\eps = S^\eps(x) \in \RR^{q^2}$ represents the projection
	$\RR^q \to T_x \{ u  =  u(x) \}$, and $P = P(x) \in \RR^{q^2}$
	the projection $\RR^q \to T_x M$.
\end{defn}
Technically speaking the function $A^\eps$ is not the second fundamental
form of $V^\eps$, as $B^\eps$ satisfies the integral identity of
Definition~\ref{defn:generalised_curvature} only up to a small error term.
This can be seen as follows:
take an arbitrary vector field $X \in C_c^1(U,TM)$,
multiply the Allen--Cahn equation by $\langle X , \nabla u \rangle$
and integrate by parts twice to obtain
\begin{equation}
\int_U \abs{\nabla u}^2 \div X - \langle \nabla_{\nabla u} X, \nabla u \rangle
= \int_U \left(\frac{\abs{\nabla u}^2}{2} - \frac{W(u)}{\eps^2}\right) \div X,
\end{equation}
which using integration with respect to $V^\eps$ is equivalent to
\begin{equation}
\label{eq:guaraco_identity}
\int_{G_n(U)} \div_S X \dve(x,S)
= 	\frac{1}{2 \sigma} \int_U 	\left( \epsilon \frac{\lvert \nabla u \rvert^2}{2}
- \frac{W(u)}{\epsilon} \right)
\div X,
\end{equation}
where $\div_S X = \sum_{i=1}^{q} \langle D_{Se_i} X, Se_i \rangle$.
As before let $X = \phi(x,S^\eps) S^\eps(e_j)$, where
$\phi \in C^1(U \times \RR^{q^2})$ has compact spatial support.
Substitute this into \eqref{eq:guaraco_identity}
and perform routine coordinate computations as before in
\eqref{eq:div_coord_express} to get
\begin{multline}
\label{eq:approximate_identity_second_fundamental_form}
\int_{G_n(U)} S_{rj} \pd_r \phi
+ B_{rjr}^\epsilon \phi
+ B_{jkr}^\epsilon \pd^*_{kr} \phi \dve(x,S)
= \\ \frac{1}{2 \sigma} \int_U
\left( \epsilon \frac{\lvert \nabla u \rvert^2}{2}
- \frac{W(u)}{\epsilon} \right) \div X.
\end{multline}
The integral on the right-hand side goes to $0$ as $\eps \to 0$---%
this is \eqref{eq:thm_properties_V_equipartition_energy}
in Theorem~\ref{lem:properties_V}.
This justifies the abuse of language that is calling $A^\eps$ the `second fundamental
form' of $V^\eps$.

We now compare $A^{\eps}$ with the second fundamental form $A^{\Sigma}$
of the level sets of $u$ near regular points: take a point $x \in U$ with
$\nabla u(x) \neq 0$.
Then the level set $\{u = u(x)\}$ is embedded in a neighbourhood
$B$ around $x$; write $\Sigma = \{ u = u(x)\} \cap B$.
The calculations from Lemma~\ref{lem:sec_form_coordinate_expression}
show that $A^\eps(x,S^\eps) = A^\Sigma(x)$, so
the second fundamental forms from Definitions~\ref{defn:second_ff}
and~\ref{defn:second_ff_app_grass} agree $V^\eps$-a.e.
Combining this observation with \eqref{eq:ae_bounded_hessian}, we get
\begin{equation}
\label{eq:norm_second_ff_and_ae}
\abs{\Ae}^2(x,S^\eps)
= \abs{A^\Sigma}^2(x)
\leq \frac{1}{\abs{\nabla u}^2}
(\abs{\nabla^2 u}^2 - \abs{\nabla \abs{\nabla u}}^2).
\end{equation}
Therefore, when
$\delta^2 E_{\eps}(u)(\abs{\nabla u}\phi,\abs{\nabla u}\phi) \geq 0$
for some $\phi \in C_c^1(U)$, then 
$\int_{G_n(U)} \lvert A^\epsilon \rvert^2 \phi^2 \dve
\leq \int_{U} \lvert \nabla \phi \rvert^2 
- \Ric(\nu^\epsilon,\nu^\epsilon) \phi^2 \intdiff \norm{\ve}$
as in \eqref{eq:second_inequality_stability}, and Corollary~\ref{cor:L2_bound_for_Ae_and_Be_if_u_stable}
also remains valid.

All the results in this appendix were established for an arbitrary
critical point $u \in C^3(U) \cap L^\infty(U)$, and thus are
valid for every term in the sequence $(u_i)$ satisfying Hypotheses
(A)--(C). Let $(V^i)$ be the corresponding varifolds
as in \eqref{eq:definition_varifolds}, and
let $(A^{\eps_i})$ be their second fundamental forms as
in Definition~\ref{defn:second_ff_app_grass}.
We restate Proposition~\ref{prop:weak_convergence_second_ff} in the
following equivalent form, with $A^{\eps_i}$ in place of $A^i$.
\begin{prop}
	\label{prop:app:weak_convergence_second_ff}
	If $\sup_i \int_W \abs{A^{\eps_i}}^2 \dnvi < +\infty$,
	then some subsequence $A^{\eps_{i'}} \dveip \rightharpoonup A \dv$
	weakly as Radon measures on $G_n(W)$, and
	\begin{equation}
	\label{eq:sec_ff_inequality}
	\int_W \abs{A}^2 \dnv \leq
	\liminf_{i \to \infty} \int_{W} \abs{A^{\eps_i}}^2 \dnvi,
	\end{equation}
	where $A$ is the classical second fundamental form of $\reg V \subset M$.
\end{prop}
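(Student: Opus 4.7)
The plan is to apply the measure-function compactness result of Lemma~\ref{lem:measure_function_convergence} together with the approximate curvature identity~\eqref{eq:approximate_identity_second_fundamental_form} in order to extract a subsequential weak limit of $A^{\eps_i} \dvi$, and then identify this limit with the classical second fundamental form of $\reg V$ via uniqueness of generalised curvature on the smooth part of $V$.

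First, the coordinate expression~\eqref{eq:be_defn} yields a pointwise bound $\abs{B^{\eps_i}} \leq C \abs{A^{\eps_i}}$ $V^i$-almost everywhere: indeed in codimension one $S^{\eps_i} = I - \nu^{\eps_i} \otimes \nu^{\eps_i}$, so $\pd_l S^{\eps_i}_{jk}$ decomposes into components built from $\nu^{\eps_i}$ and $\pd_l \nu^{\eps_i}$, whose $S^{\eps_i}$-tangential parts encode $A^{\eps_i}$. Combined with the standing hypothesis $\sup_i \int_W \abs{A^{\eps_i}}^2 \dnvi < +\infty$ and the uniform weight bound $\norm{V^i}(W) \leq E_0/(2\sigma)$ from~\eqref{eq:density_weight_measure}, Lemma~\ref{lem:measure_function_convergence} provides a subsequence along which $A^{\eps_i} \dvi \rightharpoonup \tilde A \dv$ and $B^{\eps_i} \dvi \rightharpoonup \tilde B \dv$ weakly as Radon measures on $G_n(W)$, for some $\tilde A, \tilde B \in L^2(V)$ related $V$-a.e.\ by $\tilde A_{ij}^k = P_{kr} S_{js} \tilde B_{irs}$.

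The key step is to identify $\tilde B$ with the classical $B^{\reg V}$. Fix a test function $\phi \in C^1(U \times \RR^{q^2})$ whose spatial support is compactly contained in $W$, and take $X = \phi(\cdot,S^{\eps_i}) S^{\eps_i}(e_j)$ in~\eqref{eq:approximate_identity_second_fundamental_form}. The right-hand side tends to zero by the equipartition of energy~\eqref{eq:thm_properties_V_equipartition_energy} applied to the continuous compactly supported function $\div X$. On the left-hand side, the first term $\int S_{rj} \pd_r \phi \dvi$ converges to $\int S_{rj} \pd_r \phi \dv$ by varifold convergence $V^i \rightharpoonup V$, while the remaining two terms converge by the measure-function convergence $B^{\eps_i} \dvi \rightharpoonup \tilde B \dv$. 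Passing to the limit thus yields
\begin{equation*}
\int_{G_n(W)} S_{rj} \pd_r \phi + \tilde B_{rjr} \phi + \tilde B_{jkr} \pd^*_{kr} \phi \dv(x,S) = 0,
\end{equation*}
which is the defining identity~\ref{item:gen_curv_b} of Definition~\ref{defn:generalised_curvature} for $V$ on $W$.

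Since $W \cap \sing V = \emptyset$, the constancy theorem implies that $V|_W$ is a finite sum of smooth connected components of $\reg V \cap W$ with constant integer multiplicities; on each such component Corollary~\ref{cor:uniqueness_gen_curv} identifies $B^{\reg V}$ as a generalised curvature, and the uniqueness statement immediately following Definition~\ref{defn:generalised_curvature} therefore forces $\tilde B = B^{\reg V}$ $V$-a.e.\ on $G_n(W)$. By Lemma~\ref{lem:sec_form_coordinate_expression} this in turn gives $\tilde A = A$, so $A^{\eps_i} \dvi \rightharpoonup A \dv$. The lower semicontinuity bound~\eqref{eq:sec_ff_inequality} is then a direct application of~\eqref{eq:measure_function_convergence_L2_bound} to $\abs{A^{\eps_i}}^2$. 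The principal obstacle is precisely this identification step: without the vanishing of the right-hand side of~\eqref{eq:approximate_identity_second_fundamental_form} in the limit, one would only conclude that the weak limit satisfied a perturbed version of~\ref{item:gen_curv_b}, and the equipartition of energy is exactly what makes that perturbation disappear.
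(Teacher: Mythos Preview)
Your argument follows the same route as the paper's: extract weak subsequential limits of $A^{\eps_i}\,\mathrm{d}V^i$ and $B^{\eps_i}\,\mathrm{d}V^i$ via Lemma~\ref{lem:measure_function_convergence}, pass to the limit in the approximate curvature identity~\eqref{eq:approximate_identity_second_fundamental_form} using equipartition of energy, and invoke uniqueness (Corollary~\ref{cor:uniqueness_gen_curv}) on the smooth part to identify the limit.

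One correction is needed in the first step: the pointwise bound $\abs{B^{\eps_i}} \leq C\abs{A^{\eps_i}}$ is false in general. Your identity $S^{\eps_i} = I - \nu^{\eps_i}\otimes\nu^{\eps_i}$ holds only intrinsically in $T_xM$; in the ambient $\RR^q$ one has $S^{\eps_i} = P - \nu^{\eps_i}\otimes\nu^{\eps_i}$, and differentiating produces an extra $\partial P$ term coming from the extrinsic curvature of $M\subset\RR^q$. The paper handles this via the coordinate identity~\eqref{eq:second_identity_auxiliary_quantities}, which gives $\abs{B^{\eps_i}}^2 \leq 8\bigl(\abs{A^{\eps_i}}^2 + \abs{DP}^2\bigr)$. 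Since $\abs{DP}$ is bounded on the closed manifold $M$, this is exactly where the uniform weight bound you already invoke is needed, and with this amended inequality the conclusion $\sup_i\int_W\abs{B^{\eps_i}}^2\,\mathrm{d}\lVert V^i\rVert < \infty$ survives; the remainder of your proof then goes through unchanged.
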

\begin{proof}
	Routine calculations as above in the proof of 
	Lemma~\ref{lem:sec_form_coordinate_expression} show that
	$\Bei$ is related to $A^{\eps_i}$ as follows for all $i$:
	\begin{equation}
	B_{jkl}^{\eps_i}(x,S^{\eps_i}) = A_{jk}^{l,\eps_i} + A_{jl}^{k,\eps_i} +
	S_{ks}^{\eps_i} S^{\eps_i}_{jr} \pd_r P_{sl} +
	S_{ls}^{\eps_i} S_{jr}^{\eps_i} \pd_r P_{ks}.
	\label{eq:second_identity_auxiliary_quantities}
	\end{equation}
	If we square \eqref{eq:second_identity_auxiliary_quantities} and sum
	over $j,k,l=1,\dots,q$, we get
	\begin{equation}
	\label{eq:almost_everywhere_bound_for_auxiliary_quantity}
	\abs{\Bei}^2 \leq 8 (\abs{A^{\eps_i}}^2 + \abs{DP}^2)
	\quad \text{$\vei$-a.e. in } G_n(U)
	\end{equation}
	The term $\abs{DP}^2 := \sum_{j,k,l}^q (\pd_j P_{kl})^2$ can be bounded
	by some constant $C(M)$, so that $\sup_i \int_W \abs{B^{\eps_i}}^2 \dnvi < +\infty$
	as well.
	
	By Lemma~\ref{lem:measure_function_convergence} we can pass to
	convergent subsequences $A^{\eps_i} \dvi \rightharpoonup A \dv$ and
	$B^{\eps_i} \dvi \rightharpoonup B \dv$ with limits related by
	$A_{jk}^l = P_{lr} S_{ks} B_{jrs}$ $V$-a.e. in $G_n(W)$.
	The limit $A \dv$ also satisfies
	\begin{equation}
	\int_{G_n(W)} \abs{A}^2 \dv \leq \liminf_{i \to \infty} \int_{G_n(W)}
	\abs{A^{\eps_i}}^2 \dvi,
	\end{equation}
	and the analogous inequality holds for $B \dv$.
	Moreover the error term on the right-hand side of
	\eqref{eq:approximate_identity_second_fundamental_form}
	tends to $0$ as $i \to \infty$, so the weak limit $B \dv$ satisfies
	\begin{equation}
	\label{eq:gen_second_ff}
	\int_{G_n(W)} S_{rj} \pd_r \phi
	+ B_{rjr} \phi
	+ B_{jkr} \pd^*_{kr} \phi \dv(x,S)
	= 0
	\end{equation}
	for all $\phi \in C^1(W \times \RR^{q^2})$
	with compact spatial support.
	By Corollary~\ref{cor:uniqueness_gen_curv} we have
	$B = B^{\reg V}$ and thus also
	$A = A^{\reg V}$ $V$-a.e.\ in $G_n(W)$. This concludes
	the proof.
\end{proof}
%

\emergencystretch=2.5em

\printbibliography



\end{document}